\numberwithin{equation}{section}
\newtheorem{theorem}{Theorem}[section]
\newtheorem{lemma}[theorem]{Lemma}
\newtheorem{proposition}[theorem]{Proposition}
\newtheorem{corollary}[theorem]{Corollary}
\newtheorem{definition}[theorem]{Definition}
\newtheorem{remark}[theorem]{Remark}
\newcommand{\HH}{\mathbb{H}}
\newcommand{\RR}{\mathbb{R}}
\newcommand{\SR}{\mathcal{SR}}
\title{\Large\bf  Toeplitz Operators on Quaternionic Fock Spaces
	\footnotetext{
		\endgraf Y. Lu was supported by the National Natural Science Foundation of China
		(Grant No. 12031002). C. Zu was supported by the National Natural Science
		Foundation of China (Grant No. 12401151), and the Postdoctoral Researcher
		Foundation of China (Grant No. GZB20240100).
}}
\author{
	Zhaopeng Lin,
	Yufeng Lu,
	Chao Zu\thanks{Corresponding author}
}
\date{}
\begin{document}

  \maketitle
  
  \vspace{-0.8cm}
  
  \begin{center}
  	\begin{minipage}{14cm}\small    {\noindent{\bf Abstract} \quad 
We characterize boundedness and compactness of Toeplitz operators on
quaternionic Fock spaces with  positive measure symbols and  slice-function symbols in \(\mathrm{BMO}^1\). For positive measure symbols,  we derive criteria using normalized reproducing kernels and symmetric box averages, while for slice \(\mathrm{BMO}^1\) symbols, the characterizations rely on the Berezin transform. We further introduce a global quaternionic Fock space \(F_\alpha^p\) to define Toeplitz operators with real-valued measure symbols; this space is built by integrating slice regular functions over all complex slices of \(\mathbb{H}\) and is norm-equivalent to the standard slice-based quaternionic Fock space. In the Hilbert space case \(p=2\), a slice-independent orthogonal projection exists, which allows us to define Toeplitz operators with real-valued measure symbols and slice-function symbols in a unified way.
  			\endgraf
  			{\bf Mathematics Subject Classification (2020).}\quad
  			Primary 30G35; Secondary 46E22 47B35.
  			\endgraf
  			{\bf Keywords.}\quad
  			Quaternionic Fock space, Toeplitz operator, boundedness, compactness, Berezin transform, slice regular function.}
  	\end{minipage}
  \end{center}
   
\section{Introduction}

Quaternionic analysis has developed rapidly in recent decades, giving rise to a function theory that parallels many aspects of one-variable complex analysis. Following Fueter's introduction of the Cauchy--Fueter operator and regular quaternionic functions \cite{fueter1935die}, the modern theory of slice regular 
functions was initiated by Cullen \cite{MR173012}, and further developed by 
Gentili and Struppa \cite{MR2227751,MR2353257}. This theory retains several fundamental tools from complex analysis, including power series expansions and Cauchy integral formulas \cite{MR2752913,MR3013643}. These developments have led to quaternionic analogs of several classical function spaces, such as Hardy spaces \cite{MR3801294,MR3839852,MR3358083}, Bergman spaces \cite{MR34064751,MR3059569,MR3553407,MR3605231}, Fock spaces \cite{MR3587897,MR4011262,MR4205480}, and Bloch spaces \cite{MR3311947,MR4279369}.

Despite these developments, the operator theory on quaternionic Fock spaces remains comparatively underdeveloped. Existing work has
mainly focused on composition operators
\cite{MR4384577,MR4162404,MR4519267,MR4864886,MR4732451,MR4248852}, while the Toeplitz operator
theory on quaternionic Fock spaces has not been systematically developed. 
The aim of this paper is to establish criteria for the boundedness and compactness of Toeplitz operators on quaternionic Fock spaces.

We work with quaternionic Fock spaces of the second kind. In the Hilbert space
case \(p=2\), these spaces have an explicit reproducing kernel and the monomials \(\{ z^n \}\)
form an orthogonal basis. Quaternionic Fock spaces of the first kind were also introduced in
\cite{MR3587897}. However, in the Hilbert space case, the monomials
\(\{z^n\}\) do not form an orthogonal basis, so the corresponding reproducing
kernel does not admit the simple explicit form. For this reason, this 
paper focuses on quaternionic Fock spaces of the second kind.

Let \(\mathbb H\) denote the skew field of quaternions and let
\[
\mathbb S:=\{I\in\mathbb H:I^2=-1\}
\]
be the sphere of imaginary units. For each \(I\in\mathbb S\), the corresponding complex slice is
\[
\mathbb C_I:=\{x+yI:x,y\in\mathbb R\}.
\]
Quaternionic Fock spaces (of the second kind) are defined through fixed-slice
Gaussian norms. More precisely, let \(\alpha>0\), \(0<p<\infty\), and fix
\(I\in\mathbb S\). For a slice regular function \(f\), set
\begin{equation}\label{eq:norm-slice}
	\|f\|_{p,\alpha,I}^p
	= \frac{p \alpha }{2\pi}
	\int_{\mathbb{C}_I}
	\bigl|f(z)\mathrm{e}^{-\alpha|z|^2/2}\bigr|^p
	\,\mathrm{d}m_{2,I}(z),
\end{equation}
where \(m_{2,I}\) denotes the Lebesgue measure on \(\mathbb C_I\). The
corresponding slice quaternionic Fock space is
\[
\mathfrak F_\alpha^p
=
\bigl\{f\in \SR(\mathbb H):\|f\|_{p,\alpha,I}<\infty\bigr\}.
\]
Although this definition is formulated on a fixed slice \(\mathbb C_I\), the
resulting space is independent of the choice of \(I\in\mathbb S\), up to
equivalence of norms.

To define Toeplitz operators with measure symbols on \(\mathbb H\), and to obtain 
a slice-independent projection, we introduce a global Gaussian 
\(L^p\)-framework. Every non-real quaternion has a unique representation
 \(q=x+yI\), with \(I\in\mathbb S\) and \(y>0\). Hence
 \[
 \mathbb H\setminus\mathbb R
 \cong
 \mathbb S\times\mathbb C^+,
 \qquad
 x+yI\longmapsto (I,x+\mathrm i y),
 \]
 where \(\mathbb C^+:=\{x+\mathrm i y:x,y\in\mathbb R,\ y>0\}\) and 
\(\mathrm i\) denotes the usual complex imaginary unit. Through this identification we define a
 measure \(\mathrm dV\) on \(\mathbb H\setminus\mathbb R\), and we extend it to \(\mathbb H\) by declaring \(\mathbb R\) to have \(\mathrm dV\)-measure zero. This gives the global
 slice \(L^p\)-space \(L_{s,\alpha}^p\), consisting of measurable slice functions
 \(f:\mathbb H\to\mathbb H\) such that
 \[
 \|f\|_{p,\alpha}^p
 =
 \frac{p\alpha}{\pi}
 \int_{\mathbb H}
 \bigl|f(q)\mathrm e^{-\alpha |q|^2/2}\bigr|^p
 \,\mathrm dV(q)
 <\infty.
 \]
 The global quaternionic Fock space  \(F_\alpha^p\) consists of all slice regular functions belonging to \(L_{s,\alpha}^p\).
 
We show that, for every \(I\in\mathbb S\),
 \(F_\alpha^p\) coincides with $\mathfrak F_\alpha^p$, with equivalent norms. Thus the
 global norm does not change the class of slice regular functions; rather,
 it provides a slice-independent framework suitable for operator theory. 
 When \(p=2\) (the Hilbert space case), the global space \(F_\alpha^2\) becomes a
 right quaternionic reproducing kernel Hilbert space. 
 Moreover, it admits a slice-independent orthogonal projection $
 P_\alpha:L_{s,\alpha}^2\to F_\alpha^2$.

  To prove the boundedness and compactness results for measure symbols, we first
develop a Fock--Carleson measure theory in the quaternionic setting. The relevant testing sets are the symmetric boxes $S(z,r)$, obtained by taking the axially symmetric completion of Euclidean disks in complex slices.
 These sets are the Fock-space analogue of the
symmetric boxes used in the Carleson measure theory of quaternionic Hardy and
Bergman spaces \cite{MR3664521}. We prove that quaternionic Fock--Carleson
measures and their vanishing versions can be characterized via normalized
kernel tests and the boundedness/vanishing behavior of \(\mu(S(z,r))\),  respectively.

We also employ a slice-wise Berezin transform for slice-function symbols. Given a slice function
\(f\) satisfying the  integrability condition denoted by
\((I_1)\), we define its Berezin transform slice-wise by
\[
(B_\alpha f)(z)
:= \frac{\alpha}{\pi}\int_{\mathbb C_I}\mathrm e^{-\alpha|w-z|^2}f(w)\,\mathrm d m_{2,I}(w),
\qquad z\in\mathbb C_I,\ I\in\mathbb S.
\]
We prove that this definition  yields a
well-defined slice function on \(\mathbb H\).

For slice-function symbols satisfying the condition
\((I_1)\), we define Toeplitz operators initially on the dense subspace $
\mathcal D
:=
\operatorname{span}_{\mathbb H}
\{K_\alpha(\cdot,a):a\in\mathbb H\}$ 
by
\[
T_f g=P_\alpha(f\star g),
\qquad g\in\mathcal D ,
\] where  $\star$ denotes the slice product, which preserves the slice structure.  

We also introduce Toeplitz operators with real-valued measure symbols. If a
real-valued Borel measure \(\mu\) satisfies the integrability condition
\begin{equation}\label{eq:intro-mu2}
\int_{\mathbb H}
|K_\alpha(z,w)|^2\,\mathrm e^{-\alpha|w|^2}\,\mathrm d|\mu|(w)<\infty,
\qquad z\in\mathbb H ,
\end{equation}
then the corresponding Toeplitz operator is initially defined on the dense
subspace \(\mathcal D\) by
\[
T_\mu g(z)=
\int_{\mathbb H}
K_\alpha(z,w)\,g(w)\,\mathrm e^{-\alpha|w|^2}\,\mathrm d\mu(w),
\qquad g\in\mathcal D .
\]
Moreover, if $
\mathrm d\mu=(2\alpha/\pi)f\,\mathrm dV$ 
with \(f\) a real-valued slice function, then the Toeplitz operator defined
from the measure symbol \(\mu\) agrees with the Toeplitz operator defined from
the function symbol \(f\).

We now state the main results obtained in this paper.  The
first two results concern positive measure symbols.  For a positive Borel
measure \(\mu\) on \(\mathbb H\), set
\[
\widehat\mu_r(z):=\frac{\mu(S(z,r))}{\pi r^2},
\qquad z\in\mathbb H,\ r>0.
\]

\begin{theorem}\label{thm:intro-mu-bounded}
Suppose that \(\mu\) is a positive Borel measure on \(\mathbb H\) satisfying 
\eqref{eq:intro-mu2}, and that $
\mu^1:=\mu|_{\mathbb H\setminus\mathbb R}$ 
is a Radon measure on \(\mathbb H\setminus\mathbb R\). Then the following are
equivalent:
\begin{enumerate}[(a)]
\item \(T_\mu\) is bounded on \(F_\alpha^2\);
\item the function
\[
z\longmapsto
\int_{\mathbb H}
\left|k_z(w)\mathrm e^{-\frac{\alpha}{2}|w|^2}\right|^2
\,\mathrm d\mu(w)
\]
is bounded on \(\mathbb H\);
\item \(\widehat\mu_r\) is bounded on \(\mathbb H\) for some, equivalently 
for every, \(r>0\).
\end{enumerate}
\end{theorem}

\begin{theorem}\label{thm:intro-mu-compact}
Suppose that \(\mu\) is a positive Borel measure on \(\mathbb H\)  satisfying 
\eqref{eq:intro-mu2},  
and that \(\mu^1\) is a Radon measure on \(\mathbb H\setminus\mathbb R\). Then
the following are equivalent:
\begin{enumerate}[(a)]
\item \(T_\mu\) is compact on \(F_\alpha^2\);
\item for every \(I\in\mathbb S\),
\[
\int_{\mathbb H}
\left|k_z(w)\mathrm e^{-\frac{\alpha}{2}|w|^2}\right|^2
\,\mathrm d\mu(w)\to0
\qquad \text{as } |z|\to\infty,\ z\in\mathbb C_I;
\]
\item for every \(I\in\mathbb S\), \(\widehat\mu_r(z)\to0\) as
\(|z|\to\infty\), \(z\in\mathbb C_I\), for some, equivalently 
for every,
\(r>0\).
\end{enumerate}
\end{theorem}
The next two results concern slice-function symbols in \(\mathrm{BMO}^1\).
Here \(\mathrm{BMO}^1\) denotes the class of functions on $\mathbb H$ obtained by requiring uniformly
bounded mean oscillation on the slices \(\mathbb C_I\); the precise definition is
given in Section~\ref{sec:berezin-bmo}. For a slice function \(f\) locally integrable on every slice, we define its slice-wise disk average by
\[
\widehat f_r(z)
:=
\frac{1}{\pi r^2}
\int_{B_I(z,r)} f_I(w)\,\mathrm d m_{2,I}(w),
\qquad z\in\mathbb C_I,\ I\in\mathbb S.
\]
We prove that this definition  gives a
well-defined slice function on \(\mathbb H\).  We write \(C_0(\mathbb H)\) for the space of slice functions
\(f:\mathbb H\to\mathbb H\) such that each restriction \(f_I\) is continuous on
\(\mathbb C_I\) and vanishes at infinity on \(\mathbb C_I\), for every
\(I\in\mathbb S\).
\begin{theorem}\label{thm:intro-bmo-bounded}
Let \(f\in \mathrm{BMO}^1\) be a slice function. Then the following are
equivalent:
\begin{enumerate}[(a)]
\item \(T_f\) is bounded on \(F_\alpha^2\);
\item \(B_\alpha f\in L_s^\infty(\mathbb H,\mathrm dV)\);
\item \(B_\beta f\in L_s^\infty(\mathbb H,\mathrm dV)\) for some, equivalently 
for every, \(\beta>0\);
\item \(\widehat f_r\in L_s^\infty(\mathbb H,\mathrm dV)\) for some, equivalently 
for every, \(r>0\).
\end{enumerate}
\end{theorem}

\begin{theorem}\label{thm:intro-bmo-compact}
Let \(f\in \mathrm{BMO}^1\) be a slice function, then \(T_f\) is compact on
\(F_\alpha^2\) if and only if $
B_\alpha f \in C_0(\mathbb H)$.
\end{theorem} 

Two difficulties are specific to the quaternionic setting. The first is
structural. There is no canonical Toeplitz projection on \(\mathbb H\) obtained
directly from a fixed complex slice. The global Fock-space framework and the
projection \(P_\alpha\) resolve this issue and make the function-symbol and
measure-symbol definitions compatible. The second difficulty is algebraic. The noncommutativity of slice product makes the resulting Toeplitz theory substantially more intricate than in the complex setting.  In the complex Fock-space setting, the
pointwise product makes the Toeplitz correspondence linear in the symbol. In the
quaternionic setting, however, Toeplitz operators are defined by means of the
slice product.  Consequently, scalar multiplication of the symbol does not behave
as in the complex case. Indeed, for \(a\in\mathbb H\),
right multiplication of the symbol gives
\[
T_{f\star a}=T_f\circ L_a,\qquad L_ag:=a\star g,
\]
rather than ordinary scalar
multiplication. One cannot expect a direct analogue of the classical adjoint relation $T_f^*=T_{\overline f}$ in general.
 
The paper is organized as follows.
Section~\ref{sec:preliminaries} recalls slice functions, slice products, and
quaternionic Fock spaces, and introduces the global Fock framework. Section~\ref{s:4} establishes the quaternionic
Fock--Carleson embedding theorem and the corresponding vanishing theorem. Section~\ref{sec:berezin-bmo}
studies the Berezin transform of slice functions and introduces the slice
\(\mathrm{BMO}^p\) classes. Section~\ref{sec:toeplitz} proves the boundedness and compactness criteria for
positive measure symbols, nonnegative real-valued slice-function symbols, and
slice \(\mathrm{BMO}^1\) symbols.
\medskip 

Throughout the paper, the notation $a\lesssim b$ means that there exists a constant $C>0$ such that $a\le Cb$, while $a\approx b$ means that both $a\lesssim b$ and $b\lesssim a$ hold. Unless explicitly stated otherwise, $C$ denotes a positive constant that may vary from occurrence to occurrence.
\section{Preliminaries}\label{sec:preliminaries}
\subsection{Slice functions}
Recall that the quaternionic field is denoted by
\[
\mathbb{H} = \{x_0 + x_1 i + x_2 j + x_3 k : x_l \in \mathbb{R}, ~ 0 \leq l \leq 3\},
\]
which is a four-dimensional noncommutative algebra generated by the imaginary units 
$i, j, k$, subject to the relations
\[
i^2 = j^2 = k^2 = -1, \quad ij = -ji = k, \quad jk = -kj = i, \quad ki = -ik = j.
\]

For a quaternion $q \in \mathbb{H}$, its Euclidean norm is defined by
\[
|q| = \sqrt{q \,\overline{q}} = \sqrt{\overline{q} \, q} = \big(\sum_{l=0}^3 x_l^2\big)^{1/2},
\]
where
\[
\overline{q} = x_0 - (x_1 i + x_2 j + x_3 k)
\]
denotes the quaternionic conjugate of $q$.

For any $q\in\mathbb{H}$, write
\[
q=\operatorname{Re}(q)+\operatorname{Im}(q).
\]
Set
\[
x:=\operatorname{Re}(q)\in\mathbb{R},
\qquad
y:=|\operatorname{Im}(q)|\ge 0.
\]
If $y\neq 0$, define
\[
I_q:=\frac{\operatorname{Im}(q)}{|\operatorname{Im}(q)|}\in\mathbb{S},
\]
while if $y=0$ (i.e.\ $q\in\mathbb{R}$), let $I_q$ be an arbitrary element of $\mathbb{S}$. Then
\[
q=x+yI_q.
\]

\begin{definition} 
	\label{def:slice-regular}
	A function \(f:\mathbb H\to\mathbb H\) is called a (left) slice function if there exist
	functions \(f_0,f_1:\mathbb R^2\to\mathbb H\) such that
	\[
	f(x+yI)=f_0(x,y)+I f_1(x,y),
	\qquad x,y\in\mathbb R,\ I\in\mathbb S,
	\]
	with the parity conditions
	\[
	f_0(x,-y)=f_0(x,y),
	\qquad
	f_1(x,-y)=-f_1(x,y).
	\]
  
	The slice function \(f\) is called slice intrinsic if \(f_0\) and \(f_1\) are real-valued.
	Equivalently,
	\[
	f(\mathbb C_I)\subseteq \mathbb C_I
	\qquad \text{for every } I\in\mathbb S .
	\] 
 If, in addition,  \(f_0\) and \(f_1\) are continuously differentiable and satisfy
	the Cauchy--Riemann system
	\[
	\frac{\partial f_0}{\partial x}
	-
	\frac{\partial f_1}{\partial y}=0,
	\qquad
	\frac{\partial f_0}{\partial y}
	+
	\frac{\partial f_1}{\partial x}=0,
	\]
	then \(f\) is called left slice regular. The class of left slice regular functions
	on \(\mathbb H\) is denoted by \(\SR(\mathbb H)\).
\end{definition}

\begin{lemma}\label{eq:11}
 \cite[Lemma 1.3]{MR3013643} (Splitting Lemma). If $f$ is slice  regular on $\mathbb{H}$, then for any
$I,J\in  \mathbb{S}$ with $I\perp J$, there exist two holomorphic functions $F,G:\mathbb{C}_I \to\mathbb{C}_I$
such that 
\begin{equation}\label{eq:split}
    f_I(z)=F(z)+G(z)J \ \text{for all } z\in \mathbb C_I.
\end{equation} 
\end{lemma}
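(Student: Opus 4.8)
The plan is to reduce the statement to the ordinary Cauchy--Riemann equations on a single copy of $\mathbb{C}$, via the orthogonal decomposition $\mathbb{H} = \mathbb{C}_I \oplus \mathbb{C}_I J$. First I would record the elementary fact that, since $I \perp J$ with $I,J$ purely imaginary unit quaternions, the set $\{1, I, J, IJ\}$ is an orthonormal $\mathbb{R}$-basis of $\mathbb{H}$; consequently $\mathbb{C}_I = \mathbb{R} + \mathbb{R}I$ and $\mathbb{C}_I J = \mathbb{R}J + \mathbb{R}IJ$ are complementary real subspaces with $\mathbb{H} = \mathbb{C}_I \oplus \mathbb{C}_I J$, so every $q \in \mathbb{H}$ has a unique expression $q = a + bJ$ with $a,b \in \mathbb{C}_I$. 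Applying this decomposition pointwise to $f_I(x+yI) \in \mathbb{H}$ produces functions $F, G : \mathbb{C}_I \to \mathbb{C}_I$ with $f_I(z) = F(z) + G(z)J$ for $z = x+yI$; because $F$ and $G$ are obtained from $f_I$ by composing with fixed $\mathbb{R}$-linear projections and $f$ is real-differentiable, $F$ and $G$ are real-differentiable on $\mathbb{C}_I$.

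Next I would substitute $f_I = F + GJ$ into the slice Cauchy--Riemann equation. Writing $z = x+yI$ and using that the partial derivatives $\partial_x F, \partial_y F, \partial_x G, \partial_y G$ take values in the commutative subfield $\mathbb{C}_I$ — so that $I$ passes through them — one computes
\[
0 = 2\,\overline{\partial}_I f(x+yI)
= \bigl(\partial_x F + I\,\partial_y F\bigr) + \bigl(\partial_x G + I\,\partial_y G\bigr)J,
\]
where both parenthesized terms lie in $\mathbb{C}_I$. By uniqueness of the decomposition $a + bJ$, the two terms must vanish separately:
\[
\partial_x F + I\,\partial_y F = 0, \qquad \partial_x G + I\,\partial_y G = 0 .
\]
Under the identification $\mathbb{C}_I \cong \mathbb{C}$ sending $I$ to the imaginary unit, these are exactly the Cauchy--Riemann equations, so $F$ and $G$ are holomorphic from $\mathbb{C}_I$ to $\mathbb{C}_I$, which is what was claimed.

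The one place where care is needed — and which I would flag as the main (though mild) obstacle — is the bookkeeping with noncommutativity: one must use that $\mathbb{C}_I$ is commutative so that $I$ commutes with the coefficient functions $\partial_x F, \partial_y F, \partial_x G, \partial_y G$, while $I$ does \emph{not} commute with $J$, and one must invoke uniqueness of the $a + bJ$ decomposition in order to split a single quaternionic identity into two $\mathbb{C}_I$-valued ones. Everything else is routine real-variable calculus. I note also that $F$ and $G$ depend on the auxiliary choice of $J$ orthogonal to $I$; the lemma only asserts existence.
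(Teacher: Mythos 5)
Your argument is correct: decomposing $f_I$ pointwise via $\mathbb{H}=\mathbb{C}_I\oplus\mathbb{C}_I J$, pushing $I$ through the $\mathbb{C}_I$-valued derivatives, and splitting the slice Cauchy--Riemann equation by uniqueness of the $a+bJ$ decomposition is exactly the standard proof of this lemma. The paper itself gives no proof (it quotes the result from the cited reference), and your write-up coincides with the argument given there, so there is nothing further to compare.
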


\begin{remark}\label{rem:splitting_vs_slice}
Since $
\mathbb H=\mathbb C_I\oplus \mathbb C_IJ$, 
every \(\mathbb H\)-valued function \(h\) on \(\mathbb C_I\) admits a unique
decomposition
\[
h(z)=F(z)+G(z)J,\qquad z\in\mathbb C_I,
\]
where \(F,G:\mathbb C_I\to\mathbb C_I\). The Splitting Lemma asserts more:
if \(h=f_I\) is the restriction of a slice regular function \(f\), then the
components \(F\) and \(G\) are holomorphic on \(\mathbb C_I\).
\end{remark}

\begin{theorem} \label{eq:12}
  \cite[Corollary 1.17]{MR3013643} (Representation Formula) Let $f$ be a slice regular function defined on $\mathbb{H}$. Fix $J\in\mathbb S$. Then for every $x,y\in\mathbb R$ and every $I\in\mathbb S$,  
\begin{equation}\label{rep}
\begin{split}
f(x + yI) &= \frac{1}{2}\Big[f(x + yJ) + f(x - yJ)\Big] + I\frac{1}{2}\Big[J[f(x - yJ) - f(x + yJ)]\Big] \\
&= \frac{1}{2}(1 - IJ)f(x + yJ) + \frac{1}{2}(1 + IJ)f(x - yJ).
\end{split}
\end{equation}
Moreover, for every $J\in\mathbb S$ and all $x,y\in\mathbb R$, there exist functions $a_f,b_f$, independent of $J$, such that
\begin{equation}\label{alpha beta}
   a_f(x,y) =\frac{1}{2}\Big[f(x+yJ)+f(x-yJ)\Big]  , \quad b_f(x,y)=\frac{1}{2}\Big[J[f(x-yJ)-f(x+yJ)]\Big].
\end{equation}
 These functions satisfy
\begin{equation}\label{eq:parity-ab}
a_f(x,-y)=a_f(x,y),
\qquad
b_f(x,-y)=-\,b_f(x,y),
\qquad (x,y)\in\mathbb R^2.
\end{equation}
\end{theorem}
\begin{theorem}\label{thm 2.4} \cite[Lemma 1.21]{MR3013643} (Extension Theorem)
Let $J\in\mathbb{S}$ and let $\psi:\mathbb{C}_J\to\mathbb{H}$ satisfy $\overline{\partial}_J \psi=0$ on $\mathbb{C}_J$.
Then the function
\[
\operatorname{ext}(\psi)(x+yI)
=\frac12\bigl(\psi(x+yJ)+\psi(x-yJ)\bigr)
+I\frac12\Bigl(J\bigl(\psi(x-yJ)-\psi(x+yJ)\bigr)\Bigr)
\]
defines a slice regular function on $\mathbb{H}$ whose restriction to $\mathbb{C}_J$ equals $\psi$.
Moreover, this extension is unique among slice regular functions on $\mathbb{H}$ with the same restriction to $\mathbb{C}_J$.
\end{theorem}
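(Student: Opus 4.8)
The plan is to establish the three assertions of the statement in turn: that $\operatorname{ext}(f)$ restricts to $f$ on $\mathbb{C}_J$, that it is slice regular on $\mathbb{H}$, and that it is the unique slice-regular function with that restriction. The restriction claim is the easiest: setting $I=J$ in the defining formula and using $J^2=-1$, the term $J\cdot\frac12\bigl(J(f(x-yJ)-f(x+yJ))\bigr)$ collapses to $\frac12\bigl(f(x+yJ)-f(x-yJ)\bigr)$, which when added to $\frac12\bigl(f(x+yJ)+f(x-yJ)\bigr)$ gives $f(x+yJ)$; taking $I=-J$ and using $\mathbb{C}_J=\mathbb{C}_{-J}$ disposes of the points $x-yJ$, and on the real axis ($y=0$) the formula yields $f(x)$ for every $I$.

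For slice regularity I would fix $I\in\mathbb{S}$, set $\phi(s,t):=f(s+tJ)$, and record the hypothesis $\overline{\partial}_J f=0$ in the form $\phi_s=-J\phi_t$ on $\mathbb{R}^2$ (subscripts denoting partials in the two real slots). Writing $a(x,y):=\frac12\bigl(\phi(x,y)+\phi(x,-y)\bigr)$ and $b(x,y):=\frac12 J\bigl(\phi(x,-y)-\phi(x,y)\bigr)$, one has $\operatorname{ext}(f)(x+yI)=a(x,y)+I\,b(x,y)$, where $a$ and $b$ are $\mathbb{H}$-valued, independent of $I$, with $a$ even and $b$ odd in $y$. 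Applying the slice Cauchy--Riemann operator on $\mathbb{C}_I$ and using $I^2=-1$ gives
\[
\overline{\partial}_I\operatorname{ext}(f)(x+yI)=\tfrac12\bigl[(\partial_x a-\partial_y b)+I(\partial_x b+\partial_y a)\bigr],
\]
so it suffices to verify $\partial_x a=\partial_y b$ and $\partial_x b=-\partial_y a$. Both follow by differentiating the expressions for $a$ and $b$ and substituting $\phi_s(x,\pm y)=-J\phi_t(x,\pm y)$ together with $J^2=-1$; this is a short direct computation. Separately one must check that $\operatorname{ext}(f)$ is real-differentiable on all of $\mathbb{H}$: on $\mathbb{H}\setminus\mathbb{R}$ this is clear since $q\mapsto\bigl(\operatorname{Re}q,|\operatorname{Im}q|\bigr)$ composed with $C^1$ functions is differentiable and $a,b$ inherit regularity from $f$; across $\mathbb{R}$ one uses the parity of $a$ and $b$ in $y$ (so that $a$ and $b(x,y)/y$ are functions of $x$ and $y^2$) to conclude the extension stays differentiable.

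For uniqueness I would invoke the Representation Formula \eqref{rep}. If $h\in SR(\mathbb{H})$ satisfies $h|_{\mathbb{C}_J}=f$, then for every $x+yI\in\mathbb{H}$,
\[
h(x+yI)=\tfrac12(1-IJ)h(x+yJ)+\tfrac12(1+IJ)h(x-yJ)=\tfrac12(1-IJ)f(x+yJ)+\tfrac12(1+IJ)f(x-yJ),
\]
and expanding the right-hand side reproduces verbatim the formula defining $\operatorname{ext}(f)$; hence $h=\operatorname{ext}(f)$. The only genuinely delicate point in the whole argument is the real-differentiability of $\operatorname{ext}(f)$ along the real axis — everything else reduces to associativity and the relations $I^2=J^2=-1$ in $\mathbb{H}$ together with the chain rule.
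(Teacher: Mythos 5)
The paper does not prove this statement at all: it is quoted as a known result, \cite[Theorem 1.22]{MR3013643}, so there is no internal proof to compare against. Your argument is correct and is essentially the standard one from the literature: the restriction claim is the $I=\pm J$ computation you give; slice regularity follows from the direct Cauchy--Riemann check on the stem components $a,b$ using $\phi_s=-J\phi_t$ (your identities $\partial_x a=\partial_y b$, $\partial_x b=-\partial_y a$ are exactly right); and uniqueness is an immediate consequence of the Representation Formula, Theorem~\ref{eq:12}, which the paper also imports independently, so there is no circularity.

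The only place where you should tighten the wording is the differentiability of $\operatorname{ext}(f)$ across the real axis. For a merely $C^1$ function, evenness of $a$ and oddness of $b$ in $y$ does not by itself give smooth dependence on $(x,y^2)$. Here it does, because $\overline{\partial}_J f=0$ together with the Splitting Lemma (Lemma~\ref{eq:11}) writes $f_J=F+GJ'$ with $F,G$ holomorphic on $\mathbb{C}_J$, so $f$, and hence $a$ and $b$, are real-analytic in $(x,y)$; an even (resp.\ odd) real-analytic function of $y$ is of the form $A(x,y^2)$ (resp.\ $yB(x,y^2)$) with $A,B$ real-analytic, and then $\operatorname{ext}(f)(q)=A\bigl(\operatorname{Re}q,|\operatorname{Im}q|^2\bigr)+\operatorname{Im}(q)\,B\bigl(\operatorname{Re}q,|\operatorname{Im}q|^2\bigr)$ is real-analytic on all of $\mathbb{H}$. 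With that one sentence added, your proof is complete.
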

In fact, both the Representation Formula and the Extension Theorem remain valid on axially symmetric domains\cite{MR2555912}, and they extend to several broader classes of domains as well\cite{MR4634680}.

 We next recall the slice product for  slice functions; see
 \cite{MR3632548}.
 
 \begin{definition} \label{def:slice-product}
 	Let \(f,g:\mathbb H\to\mathbb H\) be  slice functions. Write
 	\[
 	f(x+yI)=f_0(x,y)+I f_1(x,y),
 	\qquad
 	g(x+yI)=g_0(x,y)+I g_1(x,y),
 	\]
 	where \(f_0,f_1,g_0,g_1:\mathbb R^2\to\mathbb H\) satisfy the parity conditions. 
 	The  slice product of \(f\) and \(g\), denoted by \(f\star g\), is the  slice function defined by
 \begin{equation}\label{eq:star-stem-form} (f\star g)_I(x+yI) = (f_0g_0-f_1g_1)(x,y)+I(f_0g_1+f_1g_0)(x,y). \end{equation}
 Moreover,	if \(f,g\in \SR(\mathbb H)\), then \(f\star g\in \SR(\mathbb H)\).
 \end{definition}
 
We shall repeatedly use the following slice-wise formula for the slice product.

\begin{proposition}\label{prop:slice_star_formula}
	Let \(f\) and \(g\) be slice functions on \(\mathbb H\). Fix
	\(I\in\mathbb S\) and choose \(J\in\mathbb S\) with \(J\perp I\). Write
	\[
	f_I(z)=F_0(z)+F_1(z)J,
	\qquad
	g_I(z)=G_0(z)+G_1(z)J,
	\qquad z\in\mathbb C_I,
	\]
	where \(F_0,F_1,G_0,G_1:\mathbb C_I\to\mathbb C_I\). Then
	\[
	(f\star g)_I(z)
	=
	\bigl[F_0(z)G_0(z)-F_1(z)\overline{G_1(\bar z)}\bigr]
	+
	\bigl[F_0(z)G_1(z)+F_1(z)\overline{G_0(\bar z)}\bigr]J ,
	\qquad z\in\mathbb C_I .
	\]
\end{proposition}

\begin{proof}
This is the standard slice-wise expression of the slice product. It follows by 
writing the stem components in the decomposition 
\(\mathbb H=\mathbb C_I\oplus\mathbb C_IJ\) and using 
\(Ju=\overline u\,J\) for \(u\in\mathbb C_I\); see 
\cite{colombo2016entire}.
\end{proof}

\begin{remark}\label{rem:star_vs_pointwise}
(a) The $\star$-product is associative and distributive but, in general, not commutative.

(b) The $\star$-product  preserves the slice property,  whereas the pointwise product need not preserve the slice property.
  
\end{remark}


The following lemma gives several equivalent characterizations of slice intrinsic functions. In \cite{colombo2016entire}, slice intrinsicity is described in terms of preserving every slice. Here we show that it is enough to verify the property on finitely many slices.   

\begin{lemma}\label{lemma:2.5}
Suppose $f$ is a slice function on $\mathbb{H}$. The following statements are equivalent:
\begin{enumerate}
\item[(a)] $f$ is slice intrinsic.

\item[(b)] There exist $I_1,I_2\in\mathbb{S}$ with $I_1\neq \pm I_2$ such that
\[
f\bigl(\mathbb{C}_{I_\ell}\bigr)\subseteq \mathbb{C}_{I_\ell},
\qquad \ell=1,2.
\]

\item[(c)] There exist three imaginary units $I_1,I_2,I_3\in\mathbb{S}$ which are linearly independent in $\operatorname{Im}\mathbb{H}\cong\mathbb{R}^3$ such that
\[
f(\overline{z})=\overline{f(z)}
\qquad\text{for every } z\in \mathbb{C}_{I_\ell},  \ell =1,2,3.
\]

\item[(d)] There exists $I_0\in \mathbb{S}$ such that
\[
f\bigl(\mathbb{C}_{I_0}\bigr)\subseteq \mathbb{C}_{I_0}
\quad\text{and}\quad
f(\overline{z})=\overline{f(z)}\ \text{for every } z\in \mathbb{C}_{I_0}.
\]
\end{enumerate}
\end{lemma}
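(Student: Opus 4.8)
The plan is to reduce everything to the stem function $F=(a,b)$ of $f$, noting that by Definition~\ref{def:slice_stem} statement~(a) is, by definition, the assertion that $a,b\colon\mathbb{R}^2\to\mathbb{H}$ are real-valued. Since (a) immediately implies (b), (c) and (d)---if $a,b$ are real then $f(x+yI)=a(x,y)+Ib(x,y)\in\mathbb{C}_I$, and the stem symmetry gives $f(\overline z)=a(x,y)-Ib(x,y)=\overline{f(z)}$ for every $I\in\mathbb{S}$---the entire content is to show that each of (b), (c) and (d) forces $a$ and $b$ to be real-valued. For this I would first record two elementary slice-wise facts valid for an arbitrary slice function $f=\mathcal{I}(a,b)$ and an arbitrary $I_0\in\mathbb{S}$. \textbf{Fact 1:} $f(\mathbb{C}_{I_0})\subseteq\mathbb{C}_{I_0}$ if and only if $a(x,y)$ and $b(x,y)$ lie in $\mathbb{C}_{I_0}$ for all $(x,y)\in\mathbb{R}^2$. \textbf{Fact 2:} $f(\overline z)=\overline{f(z)}$ for all $z\in\mathbb{C}_{I_0}$ if and only if $a$ is real-valued and $\operatorname{Im}b(x,y)$ is orthogonal to $I_0$ for all $(x,y)$.

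To prove Fact~1, evaluate at $z=x+yI_0\in\mathbb{C}_{I_0}$ and at $\overline z=x-yI_0\in\mathbb{C}_{I_0}$; the stem symmetry gives $f(z)=a+I_0b$ and $f(\overline z)=a-I_0b$, so if both values lie in $\mathbb{C}_{I_0}$ then $a=\tfrac12\bigl(f(z)+f(\overline z)\bigr)\in\mathbb{C}_{I_0}$ and $I_0b=\tfrac12\bigl(f(z)-f(\overline z)\bigr)\in\mathbb{C}_{I_0}$, whence $b=-I_0(I_0b)\in\mathbb{C}_{I_0}$; the converse is trivial. To prove Fact~2, compare $f(\overline z)=a-I_0b$ with $\overline{f(z)}=\overline{a+I_0b}=\overline a-\overline b\,I_0$: the condition becomes $a-\overline a=I_0b-\overline b\,I_0$. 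The left-hand side is purely imaginary; writing $b=\operatorname{Re}b+\operatorname{Im}b$ one finds the right-hand side equals $I_0\operatorname{Im}b+\operatorname{Im}b\,I_0$, which, being the anticommutator of two purely imaginary quaternions, is the real scalar $-2\langle I_0,\operatorname{Im}b\rangle$. Hence both sides vanish, i.e.\ $\operatorname{Im}a=0$ and $\langle I_0,\operatorname{Im}b\rangle=0$; the converse is again a direct check.

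Granting Facts~1 and~2, the three remaining implications are short. For (b)$\Rightarrow$(a): by Fact~1, $a$ and $b$ take values in both $\mathbb{C}_{I_1}$ and $\mathbb{C}_{I_2}$, and since $I_1\neq\pm I_2$ one has $\mathbb{C}_{I_1}\cap\mathbb{C}_{I_2}=\mathbb{R}$, so $a,b$ are real-valued. For (c)$\Rightarrow$(a): Fact~2 applied to $I_1,I_2,I_3$ gives $\operatorname{Im}a=0$ and $\operatorname{Im}b\perp I_k$ for $k=1,2,3$; since $I_1,I_2,I_3$ span $\operatorname{Im}\mathbb{H}$, this forces $\operatorname{Im}b=0$. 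For (d)$\Rightarrow$(a): Fact~1 gives $\operatorname{Im}b(x,y)\in\mathbb{R}I_0$ while Fact~2 gives $\operatorname{Im}a=0$ and $\operatorname{Im}b(x,y)\perp I_0$; since $\mathbb{R}I_0\cap I_0^{\perp}=\{0\}$, we conclude $\operatorname{Im}b=0$, hence $a,b$ are real-valued.

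The one point requiring genuine care is Fact~1: one must use that the slice $\mathbb{C}_{I_0}$ contains both $x+yI_0$ and its conjugate $x-yI_0$, together with the stem symmetry $a(x,-y)=a(x,y)$, $b(x,-y)=-b(x,y)$, so that the full two-sided membership $f(\mathbb{C}_{I_0})\subseteq\mathbb{C}_{I_0}$ is exploited and not merely its restriction to the half-plane $y\ge 0$. This is exactly what makes condition (b) (two slices) or condition (d) (one slice plus the reflection symmetry) strong enough, whereas the reflection condition in (c) alone needs three independent slices. Everything else is routine linear algebra in $\mathbb{H}$.
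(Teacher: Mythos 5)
Your proposal is correct and follows essentially the same route as the paper: recover the stem components $a,b$ from $f(z)$ and $f(\overline z)$ on a slice, and use quaternionic conjugation to force them to be real (your Fact~2 condition $\operatorname{Im}b\perp I_0$ is exactly the paper's $\operatorname{Re}(I_0 b)=0$). The only difference is organizational—you factor the slice-wise computations into two reusable facts instead of repeating them in each implication—which is a harmless repackaging of the same argument.
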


\begin{proof}
Since $f$ is a slice function, there exist functions $a,b:\mathbb{R}^2\to\mathbb{H}$ such that
\[
a(x,-y)=a(x,y),\qquad b(x,-y)=-b(x,y),
\]
and for every $I\in\mathbb{S}$,
\[
f(x+yI)=a(x,y)+I\,b(x,y).
\]
For \((x,y)\in\mathbb{R}^2\), we write
\[
a:=a(x,y),\qquad b:=b(x,y).
\]
 
\noindent\textbf{(a)$\Rightarrow$(b), (a)$\Rightarrow$(c), (a)$\Rightarrow$(d).}
If $f$ is slice intrinsic, then $a$ and $ b$ are real-valued. Hence $f(\mathbb{C}_I)\subseteq\mathbb{C}_I$ for every $I$ and
$f(\overline z)=\overline{f(z)}$ for all $z\in\mathbb{H}$. In particular, (b), (c), (d) hold.
 
\noindent\textbf{(b)$\Rightarrow$(a).} 
For $\ell=1,2$ put $z_\ell:=x+yI_\ell\in\mathbb{C}_{I_\ell}$, so $\overline{z_\ell}=x-yI_\ell\in\mathbb{C}_{I_\ell}$.
Then
\[
f(z_\ell)=a+I_\ell b\in\mathbb{C}_{I_\ell},\qquad f(\overline{z_\ell})=a-I_\ell b\in\mathbb{C}_{I_\ell}.
\]
Hence
\[
a=\frac{f(z_\ell)+f(\overline{z_\ell})}{2}\in\mathbb{C}_{I_\ell},
\qquad
I_\ell b=\frac{f(z_\ell)-f(\overline{z_\ell})}{2}\in\mathbb{C}_{I_\ell},
\]
and since $\mathbb{C}_{I_\ell}$ is a (real) subalgebra containing $I_\ell$,
\[
b=-I_\ell\,(I_\ell b)\in\mathbb{C}_{I_\ell}.
\]
Therefore $a,b\in \mathbb{C}_{I_1}\cap\mathbb{C}_{I_2}$.
If $I_1\neq\pm I_2$, then $\mathbb{C}_{I_1}\cap\mathbb{C}_{I_2}=\mathbb{R}$, so $a,b\in\mathbb{R}$.
Since $(x,y)$ was arbitrary, we obtain that $f$ is slice intrinsic.
 
\noindent\textbf{(c)$\Rightarrow$(a).}  
For each $\ell\in\{1,2,3\}$ set $z_\ell:=x+yI_\ell\in\mathbb{C}_{I_\ell}$.
By parity of $a,b$,
\[
f(\overline{z_\ell})=f(x-yI_\ell)=a-I_\ell b.
\]
On the other hand, we have
\[
\overline{f(z_\ell)}=\overline{a+I_\ell b}=\overline{a}+\overline{I_\ell b}.
\]
Since $f(\overline{z_\ell})=\overline{f(z_\ell)}$ we get 
\[
a-\overline{a}=I_\ell b+\overline{I_\ell b}.
\]
Hence
\[
\operatorname{Im}(a)=0
\qquad\text{and}\qquad
\operatorname{Re}(I_\ell b)=0,
\qquad \ell=1,2,3.
\]
Thus $a\in\mathbb{R}$.

Write
\[
b=b_0+\mathbf b,
\qquad b_0\in\mathbb{R},\ \mathbf b\in\operatorname{Im}\mathbb{H}.
\]
Then for each $\ell$,
\[
\operatorname{Re}(I_\ell b)=\operatorname{Re}(I_\ell\mathbf b).
\]
Identifying $\operatorname{Im}\mathbb H$ with $\mathbb R^3$, and using the standard identity
\[
uv=-\langle u,v\rangle+u\times v,\qquad u,v\in \operatorname{Im}\mathbb H,
\]
we obtain
\[
\operatorname{Re}(I_\ell\mathbf b)=-\langle I_\ell,\mathbf b\rangle,
\]
where $\langle\cdot,\cdot\rangle$ denotes the Euclidean inner product on $\operatorname{Im}\mathbb{H}$. Since
\[
\operatorname{Re}(I_\ell b)=0,
\qquad \ell=1,2,3,
\]
and $I_1,I_2,I_3$ are linearly independent, it follows that
 $
\mathbf b=0$. 
Hence $b\in\mathbb{R}$. Since $(x,y)$ was arbitrary, it follows that $f$ is slice intrinsic.
 
\noindent\textbf{(d)$\Rightarrow$(a).}
 Let $z:=x+yI_0\in\mathbb C_{I_0}$. As in the proof of \((b)\Rightarrow(a)\), the condition $
f(\mathbb C_{I_0})\subseteq \mathbb C_{I_0}$ 
implies that
\[
a,b\in \mathbb C_{I_0}.
\]
Write
\[
a=a_0+a_1I_0,\qquad b=b_0+b_1I_0,
\qquad a_0,a_1,b_0,b_1\in\mathbb R.
\]
Since
\[
f(\bar z)=\overline{f(z)},
\]
arguing as in \((c)\Rightarrow(a)\) gives
\[
\operatorname{Im}(a)=0,
\qquad
\operatorname{Re}(I_0b)=0.
\]
Because $a,b\in\mathbb C_{I_0}$, these identities reduce to
\[
a_1=0,\qquad b_1=0.
\]
Hence $a,b\in\mathbb R$. Since $(x,y)\in\mathbb R^2$ was arbitrary, it follows that $f$ is slice intrinsic.

\end{proof}

\begin{remark}\label{rem:one-slice-not-intrinsic}
A slice function preserving a single slice need not be slice intrinsic. Indeed, fix $I\in\mathbb S$ and define
\[
f(z)\equiv I,\qquad z\in\mathbb H.
\]
Then $f$ is a slice function and $f(\mathbb C_I)\subseteq \mathbb C_I$. However, if $J\in\mathbb S$ with $J\neq\pm I$, then $I\notin\mathbb C_J$, so $f(\mathbb C_J)\nsubseteq \mathbb C_J$. Hence $f$ is not slice intrinsic.

    Likewise, a slice function satisfying  $f(\overline z)=\overline{f(z)}$ for two slices need not be intrinsic. For example, define
\[
f(x+yI):=I(yi),
\qquad x,y\in\mathbb R,\ I\in\mathbb S.
\]
Then $f$ is a slice function with stem function
\[
a(x,y)\equiv 0,\qquad b(x,y)=yi.
\]
Moreover, for every $z\in\mathbb C_j\cup\mathbb C_k$ one has $
f(\overline z)=\overline{f(z)}$. 
Indeed, if $z=x+yj\in\mathbb C_j$, then
\[
f(z)=j(yi)=-yk,\qquad f(\overline z)=j((-y)i)=yk=\overline{f(z)}.
\]
Similarly, if $z=x+yk\in\mathbb C_k$, then
\[
f(z)=k(yi)=yj,\qquad f(\overline z)=k((-y)i)=-yj=\overline{f(z)}.
\]
However,
\[
f(\mathbb C_j)\nsubseteq \mathbb C_j,
\]
since $f(x+yj)=-yk\in\mathbb C_k$ for $y\neq 0$. Hence $f$ is not slice intrinsic.
\end{remark}

 \subsection{Quaternionic Fock Spaces} 

We use the following notation throughout the paper. The symbol \(F_\alpha^p\)
denotes the global quaternionic Fock space on \(\mathbb H\). The symbol
\(\mathfrak F_\alpha^p\) denotes the slice-defined quaternionic Fock space; its
definition uses a fixed slice, but the resulting class of slice regular functions
is independent of the chosen slice up to equivalence of norms. For a fixed
\(I\in\mathbb S\), \(\mathcal F_\alpha^p(\mathbb C_I)\) denotes the classical
\(\mathbb C_I\)-valued complex Fock space on the plane \(\mathbb C_I\).

\begin{definition}
   Let $I\in\mathbb S$, $\alpha>0$, and $0<p<\infty$. For a measurable function  $f:\mathbb H\to\mathbb H$, define
\[
\|f\|_{p,\alpha,I}^p
:=
\frac{p \alpha }{2\pi}\int_{\mathbb C_I}\bigl|f(z)\mathrm e^{-\alpha|z|^2/2}\bigr|^p\,\mathrm d m_{2,I}(z),
\]
where $\mathrm d m_{2,I}$ denotes the Lebesgue measure on $\mathbb C_I$.
    
    For $p=\infty$, define
\[
\|f\|_{\infty,\alpha,I}
:=
\operatorname*{ess\,sup}_{z\in\mathbb C_I}\bigl|f(z)\mathrm e^{-\alpha|z|^2/2}\bigr|.
\]

   We then define the slice quaternionic Fock space
\[
\mathfrak F_{\alpha }^p
:=
\bigl\{f\in \SR(\mathbb H):\|f\|_{p,\alpha,I}<\infty\bigr\}.
\]

\end{definition} 
 It is known that this definition is independent of the choice of  \(I\in\mathbb S\), up to
  equivalence of norms; see, for example, \cite[Proposition~3.8]{MR3587897}. 
\begin{remark}\label{rem:slice_Fock_norm_independent}
	Fix \(I\in\mathbb S\). With
	\[
	\mathrm d\lambda_{\alpha,I}(z)
	=
	\frac{\alpha}{\pi}\mathrm e^{-\alpha|z|^{2}}\mathrm d m_{2,I}(z),
	\]
	the space \(\mathfrak F_{\alpha}^{2}\) becomes a right quaternionic Hilbert
	space when equipped with
	\[
	\langle f,g\rangle_{\alpha,I}
	:=
	\int_{\mathbb C_I}\overline{g(z)}\,f(z)\,
	\mathrm d\lambda_{\alpha,I}(z).
	\]
	The inner product is right linear in the first variable and conjugate linear in
	the second.
	
	Moreover, the corresponding \(L^2\)-norm is independent of the choice of the
	slice for every measurable slice function. More precisely, if \(f\) is a
	measurable slice function and \(\|f\|_{2,\alpha,I}<\infty\) for some
	\(I\in\mathbb S\), then
	\[
	\|f\|_{2,\alpha,I}=\|f\|_{2,\alpha,J},
	\qquad I,J\in\mathbb S .
	\]
	Indeed, writing
	\[
	f(x+yJ)=a_f(x,y)+Jb_f(x,y),
	\]
	we have
	\[
	|a_f(x,y)|^2+|b_f(x,y)|^2
	=
	\frac12\Bigl(|f(x+yJ)|^2+|f(x-yJ)|^2\Bigr),
	\]
	and the left-hand side is independent of \(J\). Multiplying by
	\(\mathrm e^{-\alpha(x^2+y^2)}\) and integrating over \(\mathbb R^2\), we obtain
	\[
	\int_{\mathbb R^2}
	\bigl(|a_f(x,y)|^2+|b_f(x,y)|^2\bigr)
	\mathrm e^{-\alpha(x^2+y^2)}\,\mathrm d x\,\mathrm d y
	=
	\int_{\mathbb R^2}
	|f(x+yJ)|^2
	\mathrm e^{-\alpha(x^2+y^2)}\,\mathrm d x\,\mathrm d y,
	\]
	where the equality follows from the change of variables \(y\mapsto -y\).
	Therefore, up to the normalizing constant \(\alpha/\pi\), the right-hand side
	is precisely \(\|f\|_{2,\alpha,J}^2\), and hence it is independent of \(J\).
	In particular, for \(f\in \SR(\mathbb H)\), the Fock norm
	\(\|f\|_{2,\alpha,I}\) is independent of \(I\).
\end{remark}
 
\begin{proposition}\label{prop:3.6}
\cite[Theorem~3.10]{MR3587897}
Let $\alpha>0$ and fix $I\in\mathbb{S}$. Then $\bigl(\mathfrak{F}^2_{\alpha },\langle\cdot,\cdot\rangle_{\alpha,I}\bigr)$
is a quaternionic reproducing kernel Hilbert space. Its reproducing kernel is
\[
K_{\alpha}(z,w)=\sum_{n=0}^{\infty}\frac{\alpha^{n}}{n!}\, z^{n}\,\overline{w}^{\,n}:=  \mathrm{e}^{\alpha z\overline{w}}_{\star},
\qquad z,w\in\mathbb{H}.
\]
\end{proposition}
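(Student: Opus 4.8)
The plan is to realize $\mathfrak F^2_{\alpha,I}$ as a weighted $\ell^2$–space of Taylor coefficients and then verify the reproducing property of $K_\alpha$ by a direct computation, with the only genuine analytic input imported slice by slice from the classical complex Fock space $F^2_\alpha(\mathbb C_I)$ via the Splitting Lemma (Lemma~\ref{eq:11}).

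\emph{Step 1: $\mathfrak F^2_{\alpha,I}$ is a quaternionic RKHS.} Boundedness and right-linearity of the evaluation functionals $f\mapsto f(w)$ on $\mathfrak F^2_{\alpha,I}$ are immediate from Lemma~\ref{lemma 3.5} (with $p=2$), which gives $|f(w)|\le 4\mathrm e^{\alpha|w|^2/2}\|f\|_{2,\alpha,I}$. Completeness of $\mathfrak F^2_{\alpha,I}$ — and, as used in Step~2, density of polynomials — I would obtain as follows: fixing $J\perp I$ and writing $f_I=F+GJ$ with $F,G$ holomorphic on $\mathbb C_I$, the orthogonality $F(z)\perp G(z)J$ in $\mathbb H$ gives $\|f\|_{2,\alpha,I}^2=\|F\|_{F^2_\alpha(\mathbb C_I)}^2+\|G\|_{F^2_\alpha(\mathbb C_I)}^2$, so $\mathfrak F^2_{\alpha,I}$ inherits completeness and polynomial density from $F^2_\alpha(\mathbb C_I)$. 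Hence $\mathfrak F^2_{\alpha,I}$ is a quaternionic reproducing kernel Hilbert space, and it remains only to identify its kernel.

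\emph{Step 2: the inner product in coefficients.} Every $f\in SR(\mathbb H)$ has a globally convergent power series $f(q)=\sum_{n\ge0}q^n a_n$, $a_n\in\mathbb H$. Using polar coordinates on $\mathbb C_I$ and the elementary identity
\[
\frac{\alpha}{\pi}\int_{\mathbb C_I}\overline{z}^{\,m}z^{\,n}\,\mathrm e^{-\alpha|z|^2}\,\mathrm dm_{2,I}(z)=\delta_{mn}\,\frac{n!}{\alpha^n},
\]
and observing that $\overline{z}^{\,m}z^{\,n}$ lies in the commutative field $\mathbb C_I$ so that the quaternionic coefficients pass outside the integral on the correct sides, one gets
\[
\langle q^n a,\,q^m b\rangle_{\alpha,I}=\delta_{mn}\,\frac{n!}{\alpha^n}\,\overline b\,a,\qquad a,b\in\mathbb H .
\]
Thus the monomials are pairwise orthogonal with $\|q^n\|_{2,\alpha,I}^2=n!/\alpha^n$ (independent of $I$), and a Parseval argument — legitimate by the polynomial density of Step~1 — yields, for $f(q)=\sum q^n a_n$ and $g(q)=\sum q^n b_n$ in $\mathfrak F^2_{\alpha,I}$,
\[
\|f\|_{2,\alpha,I}^2=\sum_{n\ge0}\frac{n!}{\alpha^n}|a_n|^2,
\qquad
\langle f,g\rangle_{\alpha,I}=\sum_{n\ge0}\frac{n!}{\alpha^n}\,\overline{b_n}\,a_n ,
\]
with the Taylor series of $f$ converging to $f$ in the norm of $\mathfrak F^2_{\alpha,I}$.

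\emph{Step 3: identification of the kernel, and the main obstacle.} Fix $w\in\mathbb H$ and set $c_n(w)=\frac{\alpha^n}{n!}\overline{w}^{\,n}$. Since $|c_n(w)|^{1/n}\to0$, the series $K_\alpha(z,w)=\sum_n z^n c_n(w)$ defines an entire slice regular function of $z$; moreover $\sum_n\frac{n!}{\alpha^n}|c_n(w)|^2=\sum_n\frac{\alpha^n}{n!}|w|^{2n}=\mathrm e^{\alpha|w|^2}<\infty$, so $K_\alpha(\cdot,w)\in\mathfrak F^2_{\alpha,I}$. Applying the coefficient formula of Step~2 with $g=K_\alpha(\cdot,w)$, and using that $\alpha^n/n!$ is real and $\overline{\overline{w}^{\,n}}=w^n$,
\[
\langle f,\,K_\alpha(\cdot,w)\rangle_{\alpha,I}
=\sum_{n\ge0}\frac{n!}{\alpha^n}\,\overline{c_n(w)}\,a_n
=\sum_{n\ge0}\frac{n!}{\alpha^n}\cdot\frac{\alpha^n}{n!}\,w^n a_n
=\sum_{n\ge0}w^n a_n=f(w),
\]
which is exactly the reproducing identity (recall the inner product is right-linear in the first slot and conjugate-linear in the second). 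The one nontrivial point is the Parseval step in Step~2 — equivalently, norm-convergence of Taylor series, i.e.\ density of polynomials in $\mathfrak F^2_{\alpha,I}$ — which is precisely why the reduction to $F^2_\alpha(\mathbb C_I)$ via the Splitting Lemma is needed; the rest is a bounded geometric-series estimate together with careful noncommutative bookkeeping, keeping the real scalars $n!/\alpha^n$ and the coefficients $a_n$ on the sides dictated by the right-linear convention for $\langle\cdot,\cdot\rangle_{\alpha,I}$.
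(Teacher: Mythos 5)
Your argument is correct: the orthogonality relation $\langle q^n a,q^m b\rangle_{\alpha,I}=\delta_{mn}\,\frac{n!}{\alpha^n}\,\overline b\,a$, the Parseval expansion (justified by polynomial density and completeness, which you correctly import slice-wise from $F^2_\alpha(\mathbb C_I)$ via the Splitting Lemma), and the coefficient computation $\langle f,K_\alpha(\cdot,w)\rangle_{\alpha,I}=\sum_n w^n a_n=f(w)$ are all consistent with the paper's normalization of $\mathrm d\lambda_{\alpha,I}$ and with the convention that the inner product is right-linear in the first slot. Note that the paper itself offers no proof of this proposition --- it is quoted from \cite[Theorem~3.10]{MR3587897} --- and your monomial-orthogonality argument is essentially the standard proof given there, so there is nothing to add beyond observing that your bookkeeping of the quaternionic coefficients (keeping $\overline{b_n}$ on the left and $a_n$ on the right, with the real scalars $n!/\alpha^n$ commuting freely) is exactly what makes the computation close for all $w\in\mathbb H$, not just $w\in\mathbb C_I$.
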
 
There is a canonical bijection between $\mathbb H\setminus\mathbb R$ and $\mathbb S\times\mathbb C^+$, given by
\[
x+yI \longmapsto (I,x+\mathrm i y),
\]
where \(\mathbb C^+=\{x+\mathrm i y:x,y\in\mathbb R,\ y>0\}\) and
\(\mathrm i\) denotes the usual complex imaginary unit.

Via this identification, we endow $\mathbb H\setminus\mathbb R$ with the product measure
\[
\mathrm dV(z):=\mathrm d m_{2,I}(z)\,\mathrm d\sigma(I),\qquad z=x+yI\in\mathbb C_I,\ y>0.
\]
Here $\sigma$ is the normalized surface measure on $\mathbb{S}$. We extend $\mathrm{d}V$ to $\mathbb H$ by declaring the real axis to have $\mathrm{d}V$-measure zero; hence integrals over
$\mathbb H$ are understood as integrals over $\mathbb H\setminus\mathbb R$.
We further define the Gaussian probability measure on $\mathbb{H}$ by
\[
\mathrm{d}\lambda_\alpha(z):=\frac{2\alpha}{\pi}\mathrm{e}^{-\alpha|z|^2}\,\mathrm{d}V(z).
\]
With this choice of constant, $\lambda_\alpha$ is a probability measure on $\mathbb{H}\setminus\mathbb{R}$ and the real axis has $\lambda_\alpha$-measure zero. 

\begin{definition}
Let $\alpha>0$ and $0<p<\infty$. For a measurable function $f:\mathbb H\to\mathbb H$, define
    \begin{equation}
        \|f\|_{p,\alpha}^p = \frac{p \alpha }{\pi} \int_{\mathbb{H}} |f(z)  \mathrm{e}^{-\alpha |z|^2/2} |^p \, \mathrm{d}V(z).
    \end{equation} 
    
    For $p = \infty$, define
    $$ \|f\|_{\infty,\alpha} = \operatorname*{ess\,sup}\left\{|f(z)| \mathrm{e}^{-\alpha |z|^2/2} : z \in \mathbb{H} \right\}. $$
\end{definition} 

\begin{definition}
	Let \(\alpha>0\) and \(0<p\le \infty\). Define \(L^p_{s,\alpha}\) to be the
	space of measurable slice functions \(f:\mathbb H\to\mathbb H\) satisfying $
	\|f\|_{p,\alpha}<\infty$. 
	The global quaternionic Fock space is
	\[
	F_\alpha^p:=L^p_{s,\alpha}\cap \SR(\mathbb H).
	\]
\end{definition}
\begin{remark}
 The space $F_\alpha^2$ is a right quaternionic Hilbert space with inner product $\langle \cdot ~,~\cdot \rangle_{\alpha}$ defined by
  $$ \langle f , g \rangle_{\alpha} = \int_{\mathbb{H}} \overline{g(z)} f(z) \, \mathrm{d}\lambda_{\alpha}(z) $$ for $f, g \in F^2_\alpha$.
 
\end{remark}


\begin{theorem}\label{thm dengju}
Let \(\alpha>0\) and \(0<p<\infty\). Let \(f\) be a measurable slice function
on \(\mathbb H\). Then, for every fixed \(I_0\in\mathbb S\),
 $
\|f\|_{p,\alpha}
\approx
\|f\|_{p,\alpha,I_0}$. 
Consequently,
\[
f\in L^p_{s,\alpha}
\quad\Longleftrightarrow\quad
\|f\|_{p,\alpha,I_0}<\infty
\]
for some, equivalently for every, \(I_0\in\mathbb S\).

In the  case \(p=2\), one has $
\|f\|_{2,\alpha}
=
\|f\|_{2,\alpha,I_0}, I_0\in\mathbb S$. 
Moreover, for \(f,g\in L^2_{s,\alpha}\),
\[
\langle f,g\rangle_{\alpha}
=
\langle f,g\rangle_{\alpha,I_0}.
\]
\end{theorem}

\begin{proof} 
Fixed $I_0\in\mathbb S$, a direct computation based on the definition of $\mathrm dV$ yields
\begin{equation}
\begin{split}
||f||^p_{p,\alpha} &= \frac{p\alpha}{\pi} \int_\mathbb{H} |f(z) \mathrm{e}^{-\frac{\alpha}{2}|z|^2}|^p \, \mathrm{d}V(z) \\
&= \frac{p \alpha }{\pi} \int_\mathbb{S} \int_0^\pi \int_0^\infty |f(r\mathrm{e}^{I\theta}) \mathrm{e}^{-\frac{\alpha}{2} r^2}|^p r \, \mathrm{d}r \, \mathrm{d}\theta \, \mathrm{d} \sigma(I) \\
&= \frac{p \alpha }{2\pi} \int_\mathbb{S} \int_0^{2\pi} \int_0^\infty |f(r\mathrm{e}^{I\theta}) \mathrm{e}^{-\frac{\alpha}{2} r^2}|^p r \, \mathrm{d}r \, \mathrm{d}\theta \, \mathrm{d} \sigma(I) \\
&= \int_\mathbb{S} \|f\|^p_{p,\alpha,I} \, \mathrm{d} \sigma(I) \\
&\approx \|f\|_{p,\alpha,I_0}^p,
\end{split}
\end{equation}
where the last equivalence follows from the equivalence of the slice norms for
different imaginary units; see \cite[Proposition~3.8]{MR3587897}.

In the  case $p=2$,  by  Remark \ref{rem:slice_Fock_norm_independent}, the slice norm $\|\cdot\|_{2,\alpha,I}$ is independent of the choice of $I \in \mathbb S$. Hence, 
 \[
\| f \|^2_{2,\alpha}= \int_{\mathbb{S}} \| f \|^2_{2,\alpha,I} \mathrm{d} \sigma(I) = \| f \|_{2,\alpha,I_0}^2.
  \] 
Applying the quaternionic polarization identity
\cite[p.~023503-11]{MR3450567}, we obtain 
\[
\langle f,g\rangle_{\alpha}=\langle f,g\rangle_{\alpha,I_0}.
\]
Thus the global and fixed-slice inner products agree on \(L^2_{s,\alpha}\).
\end{proof}
By Theorem \ref{thm dengju}, we obtain
\begin{corollary}\label{cor:Fock-global-slice-equivalence}
	Let \(\alpha>0\) and \(0<p<\infty\). Then  the global quaternionic Fock space $F_\alpha^p$ and the slice quaternionic Fock space $\mathfrak F_{\alpha }^p$ coincide as sets and have equivalent norms. Moreover,  the spaces $F_\alpha^2$ and $\mathfrak F_{\alpha }^2$ are isometrically isomorphic.
\end{corollary}
Since $F_\alpha^2$ is a closed subspace of $L_{s,\alpha}^2$, there exists an orthogonal projection
\[
P_\alpha:L_{s,\alpha}^2\to F_\alpha^2,
\]
given by
\[
P_\alpha h(z)
=
\int_{\mathbb H}\mathrm e_*^{\alpha z\bar w}\,h(w)\,
\mathrm d\lambda_\alpha(w),
\qquad h\in L_{s,\alpha}^2 .
\]
For bounded slice function $f$, we define
 \[
 T_f g:=P_\alpha(f\star g),\qquad g\in F_\alpha^2.
 \]
 We call $T_f$ the Toeplitz operator on $F^2_\alpha$ with symbol $f$.

On the other hand, motivated by \cite{MR4162404}, one may also define Toeplitz
operators first on a fixed slice and then extend them to \(\mathbb H\). For a bounded slice function \(f\), define on the slice
\(\mathbb C_I\)
\[
\bigl(T_f^{(I)}g\bigr)_I(z)
:=\langle (f\star g)_I , K_{\alpha}(\cdot,z) \rangle_{\alpha,I}=
\int_{\mathbb C_I} e_*^{\alpha z\bar w}\,\bigl(f \star g  \bigr)_I(w)\,\mathrm d\lambda_{\alpha,I}(w) ,
 \qquad z\in\mathbb C_I,~~g\in F_\alpha^2,
\]  
and then extend to $\mathbb H$ by the (unique) slice regular extension
\[
\bigl(T^{(I)}_f g\bigr)(q)
:=\operatorname{ext}\!\left(\,T^{(I)}_f g\big|_{\mathbb C_I}\right)(q),\qquad q\in\mathbb H .
\] 

We claim that the global and fixed-slice constructions agree. Indeed, by Theorem~\ref{thm dengju}, for every $g\in F_\alpha^2$ and    $z\in \mathbb{C}_I $ one has
\begin{equation}\label{eq: 412}
(T_f g)\big|_{\mathbb C_I}= \langle f\star g , K_{\alpha}(\cdot,z) \rangle_{\alpha} = \langle  f\star g  , K_{\alpha}(\cdot,z) \rangle_{\alpha,I}=  \bigl(T^{(I)}_f g\bigr)\big|_{\mathbb C_I}. 
\end{equation}  
Since both sides are slice regular, the Extension Theorem yields $T_f g=T^{(I)}_f g$ on $\mathbb H$.  

\section{Quaternionic Fock--Carleson measures}\label{s:4} 
For a positive Borel measure \(\mu\) on \(\mathbb H\), we write
\[
\mu=\mu_{\mathbb R}+\mu^1,
\qquad
\mu_{\mathbb R}:=\mu|_{\mathbb R},
\qquad
\mu^1:=\mu|_{\mathbb H\setminus\mathbb R}.
\]
Thus \(\mu_{\mathbb R}\) is supported on \(\mathbb R\), while
\(\mu^1(\mathbb R)=0\). Throughout this section we assume, when needed, that
\(\mu^1\) is a Radon measure on \(\mathbb H\setminus\mathbb R\).

 Assume that \(\mu^1\) is Radon on \(\mathbb H\setminus\mathbb R\). Under the 
canonical identification
\[
\mathbb H\setminus\mathbb R\simeq \mathbb S\times\mathbb C^+,
\]
we may regard \(\mu^1\) as a Radon measure on \(\mathbb S\times\mathbb C^+\).

By the disintegration theorem for Radon measures, see \cite[10.4.16. Example]{MR2267655},  there is a measure $\nu_+$ on $\mathbb S$ and, for $\nu_+$-a.e. $I\in\mathbb S$, a Radon measure $\mu_I^+$ on
\[
\mathbb C_I^+=\{x+Iy\in\mathbb H:y>0\}
\]
such that
\[
\mu^1(E)=\int_{\mathbb S}\mu_I^+(E\cap \mathbb C_I^+)\,\mathrm d\nu_+(I),
\]
for every Borel set $E\subseteq \mathbb H\setminus\mathbb R$.
 
 Therefore, using the decomposition \(\mu=\mu_{\mathbb R}+\mu^1\), we obtain
\begin{equation}\label{misure1}
\int_{\mathbb{H}} f(x+Iy)\, \mathrm{d}\mu(x+Iy)
= \int_{\mathbb{R}} f(x)\, \mathrm{d}\mu_{\mathbb{R}}(x) 
+ \int_{\mathbb{S}} \int_{\mathbb{C}_I^+} f(x+Iy)\, \mathrm{d}\mu_I^+(x+Iy)\, \mathrm{d}\nu_+(I),
\end{equation}
for all $f \in L ^1(\mathbb{H},\mathrm{d}\mu)$.
 
\begin{definition}
    For $0<p<\infty$, a positive Borel measure $\mu$ is called a  quaternionic Fock--Carleson measure for $F^p_\alpha$  if there exists a constant $C>0$ such that for every $f \in F_\alpha^p$,
    \begin{equation}\label{9}
    \int_{\mathbb{H}} \big|f(w) \mathrm{e}^{-\frac{\alpha}{2}|w|^2}\big|^p\, \mathrm{d}\mu(w) 
    \leq C \|f\|_{p,\alpha}^p.
    \end{equation} 
\end{definition}
  
We next record a pointwise estimate analogous to the classical Fock-space
estimate and introduce the geometric testing sets used in the quaternionic
framework; compare also the Carleson boxes for Bergman spaces
\cite{MR374886} and the symmetric boxes in quaternionic Hardy and Bergman
spaces \cite{MR3664521}.

\begin{lemma}\label{lemma 4.1} 
Fix $I\in \mathbb{S}$. Let $\alpha, p, R > 0$. Then there exists a constant $C = C(p,\alpha,R) > 0$ such that
\[
\left| f(a) \mathrm{e}^{-\alpha |a|^2 / 2} \right|^p 
\leq \frac{C}{r^2} \int_{B_I(a,r)} \left| f(z) \mathrm{e}^{-\alpha |z|^2 / 2} \right|^p \, \mathrm{d}m_{2,I}(z),
\]
for all slice regular functions $f$, all $a \in \mathbb{C}_I$, and all $r \in (0,R]$, where $B_I(a,r)$ denotes the Euclidean disk centered at $a$ with radius $r$ on the complex plane $\mathbb{C}_I$.
\end{lemma}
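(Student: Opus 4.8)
The plan is to transfer the estimate to the single slice $\mathbb C_I$, where $f_I$ decomposes via the Splitting Lemma into two genuinely holomorphic components; recognize the relevant Gaussian‑weighted quantity as a subharmonic function on $\mathbb C_I$; apply the area sub‑mean‑value inequality on the disk $B_I(a,r)$; and finally absorb the Gaussian weight using the crude bound $|z-a|\le R$ valid on that disk.

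First I would fix $J\in\mathbb S$ with $J\perp I$ and apply the Splitting Lemma (Lemma~\ref{eq:11}): since $f$ is slice regular there are holomorphic functions $F,G\colon\mathbb C_I\to\mathbb C_I$ with $f_I(z)=F(z)+G(z)J$, and since $\{1,I,J,IJ\}$ is an orthonormal real basis of $\mathbb H$ we get $|f_I(z)|^{2}=|F(z)|^{2}+|G(z)|^{2}$ for $z\in\mathbb C_I$. The key step is then to show that, for a fixed $a\in\mathbb C_I$, the function
\[
\Psi_a(z):=|f_I(z)|^{p}\,\mathrm e^{-p\alpha\operatorname{Re}(\bar a z)},\qquad z\in\mathbb C_I,
\]
is subharmonic on $\mathbb C_I$. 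I would prove this through the Cauchy–Schwarz extremal identity
\[
\Psi_a(z)=\sup_{|\lambda|^{2}+|\mu|^{2}=1}\bigl|(\lambda F(z)+\mu G(z))\,\mathrm e^{-\alpha\bar a z}\bigr|^{p},
\]
the supremum running over $\lambda,\mu\in\mathbb C_I$: for each admissible pair the map $z\mapsto(\lambda F(z)+\mu G(z))\,\mathrm e^{-\alpha\bar a z}$ is holomorphic on $\mathbb C_I$ (the entire exponential times a holomorphic combination of $F,G$), so the $p$‑th power of its modulus is subharmonic, and the pointwise supremum of this family is continuous—being equal to $\Psi_a$—hence itself subharmonic. (Equivalently, $\log\Psi_a=\tfrac p2\log(|F|^{2}+|G|^{2})-p\alpha\operatorname{Re}(\bar a z)$ is subharmonic, so $\Psi_a$ is logarithmically subharmonic.)

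With subharmonicity in hand, the area sub‑mean‑value inequality gives, for every $r\in(0,R]$,
\[
\Psi_a(a)\le\frac1{\pi r^{2}}\int_{B_I(a,r)}\Psi_a(z)\,\mathrm dm_{2,I}(z).
\]
Since $\bar a a=|a|^{2}$ we have $\Psi_a(a)=|f_I(a)|^{p}\mathrm e^{-p\alpha|a|^{2}}$, and the identity $|z-a|^{2}=|z|^{2}-2\operatorname{Re}(\bar a z)+|a|^{2}$ on $\mathbb C_I$ shows that multiplying the above by $\mathrm e^{p\alpha|a|^{2}/2}$ turns it into
\[
\bigl|f(a)\mathrm e^{-\alpha|a|^{2}/2}\bigr|^{p}
\le\frac1{\pi r^{2}}\int_{B_I(a,r)}\bigl|f(z)\mathrm e^{-\alpha|z|^{2}/2}\bigr|^{p}\,\mathrm e^{\frac{p\alpha}{2}|z-a|^{2}}\,\mathrm dm_{2,I}(z),
\]
where $f(a)=f_I(a)$. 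For $z\in B_I(a,r)$ with $r\le R$ one has $\mathrm e^{\frac{p\alpha}{2}|z-a|^{2}}\le\mathrm e^{p\alpha R^{2}/2}$, so the lemma follows with $C=\mathrm e^{p\alpha R^{2}/2}/\pi$.

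The only genuinely delicate point is the subharmonicity of $\Psi_a$: since $f_I$ is quaternion‑valued it is not holomorphic in the classical sense, and it is precisely the Splitting Lemma that allows $|f_I|^{p}$ to be written as a supremum of $p$‑th powers of moduli of honestly holomorphic functions. An alternative, if one prefers to invoke the scalar theory, is to apply the classical pointwise estimate on the complex Fock space $F^{p}_\alpha(\mathbb C_I)$ to $F$ and to $G$ separately and recombine using the elementary comparisons $c_p^{-1}(|F|^{2}+|G|^{2})^{p/2}\le|F|^{p}+|G|^{p}\le c_p(|F|^{2}+|G|^{2})^{p/2}$; the subharmonic route above has the advantage of being self‑contained and of producing the explicit constant directly.
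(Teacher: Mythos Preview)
Your proof is correct. Both you and the paper start from the Splitting Lemma to write $f_I=F+GJ$ with $F,G$ holomorphic on $\mathbb C_I$, but the two arguments then diverge slightly. The paper simply invokes the known complex Fock estimate (Lemma~2.32 in \cite{MR2934601}) for each of $F$ and $G$, and recombines via the elementary comparison $|f_I|^p\approx |F|^p+|G|^p$; this is exactly the alternative you sketch in your final paragraph. Your primary route is more self-contained: you observe that $\Psi_a(z)=|f_I(z)|^p\mathrm e^{-p\alpha\operatorname{Re}(\bar a z)}$ is subharmonic on $\mathbb C_I$ by expressing it as a supremum of $|h_{\lambda,\mu}|^p$ over holomorphic $h_{\lambda,\mu}$ via Cauchy--Schwarz, then apply the area sub-mean-value inequality and absorb the Gaussian discrepancy $\mathrm e^{\frac{p\alpha}{2}|z-a|^2}\le \mathrm e^{\frac{p\alpha}{2}R^2}$. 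This avoids citing the complex result as a black box and produces the explicit constant $C=\pi^{-1}\mathrm e^{p\alpha R^2/2}$, at the cost of a short extra argument; the paper's version is terser but relies on an external reference.
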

 \begin{proof}
Fix $I,J\in\mathbb{S}$ with $J\perp I$. On $\mathbb{C}_I$ write the slice decomposition
\[
f_I(z)=F(z)+G(z)\,J,
\]
where $F,G:\mathbb{C}_I\to\mathbb{C}_I$ are holomorphic. Then pointwise
\[
\lvert f_I(z)\rvert^{2}=\lvert F(z)\rvert^{2}+\lvert G(z)\rvert^{2}
\quad\Longrightarrow\quad
\lvert f_I(z)\rvert^{p}\approx \lvert F(z)\rvert^{p}+\lvert G(z)\rvert^{p}.
\]
 
Applying \cite[Lemma 2.32]{MR2934601} to the holomorphic functions $F,G$  on $\mathbb{C}_I$, we obtain
\begin{equation*}
\begin{split}
\left| f(a) \mathrm{e}^{-\alpha |a|^2 / 2} \right|^p 
&\approx \lvert F(a)\mathrm{e}^{-\alpha |a|^2 / 2} \rvert^{p} + \lvert G(a)\mathrm{e}^{-\alpha |a|^2 / 2} \rvert^{p} \\
&\leq \frac{C}{r^2} \int_{B_I(a,r)} \left(\left| F(z) \mathrm{e}^{-\alpha |z|^2 / 2} \right|^p + \left| G(z) \mathrm{e}^{-\alpha |z|^2 / 2} \right|^p \right) \, \mathrm{d}m_{2,I} (z) \\
&\approx \frac{C}{r^2} \int_{B_I(a,r)} \left| f(z) \mathrm{e}^{-\alpha |z|^2 / 2} \right|^p \, \mathrm{d}m_{2,I}(z).
\end{split}
\end{equation*}
 \end{proof}

For \(z=x+yJ\in\mathbb H\), \(y\ge0\), and $I\in\mathbb S$,  set $
z_I:=x+yI\in\mathbb C_I  $. For \(r>0\), define the symmetric box centered at \(z\) by
\[
S(z,r):=
\bigcup_{I\in\mathbb S} B_I(z_I,r),
\]
where \(B_I(z_I,r)\) denotes the Euclidean disk in the slice \(\mathbb C_I\)
with center \(z_I\) and radius \(r\).
Equivalently, \(S(z,r)\) is the axially symmetric completion of the disk
\(B_J(z,r)\subset\mathbb C_J\).
\begin{theorem}\label{thm:Fock--Carleson}
Let \(\alpha>0\), \(0<p<\infty\), and \(0<r<\infty\). Let \(\mu\) be a positive
Borel measure on \(\mathbb H\) such that $
\mu^1:=\mu|_{\mathbb H\setminus\mathbb R}$ 
is a Radon measure on \(\mathbb H\setminus\mathbb R\). Then the following are
equivalent:
\begin{enumerate}
    \item[(a)] \(\mu\) is a quaternionic Fock--Carleson measure for \(F_\alpha^p\).
    
    \item[(b)] There exists a constant \(C > 0\) such that
    \[
    \int_{\mathbb H}
    \left| k_z(w)\, \mathrm{e}^{-\frac{\alpha}{2}|w|^2} \right|^p
    \mathrm{d}\mu(w) \le C,
    \]
    for all \(z \in \mathbb H\), where
    \(k_z(w)=\mathrm{e}_*^{\alpha w\overline z}/\mathrm{e}^{\alpha |z|^2/2}\)
    is the normalized reproducing kernel.
    
    \item[(c)] There exists a constant \(C > 0\) such that
    \[
    \mu(S(z,r)) \le C
    \qquad \text{for all } z\in\mathbb H.
    \]
\end{enumerate}
\end{theorem}
\begin{proof}
We prove \textup{(a)}\(\Rightarrow\)\textup{(b)},
\textup{(c)}\(\Rightarrow\)\textup{(a)}, and
\textup{(b)}\(\Rightarrow\)\textup{(c)}.

Taking \(f=k_z\) in the Carleson embedding inequality and using
Theorem~\ref{thm dengju}, condition \textup{(a)} immediately implies
\textup{(b)}.

We next prove \textup{(c)}\(\Rightarrow\)\textup{(a)}. Decompose $
\mu=\mu_{\mathbb R}+\mu^1$. We treat the two parts separately.

\noindent\textbf{Step 1: The part supported on \(\mathbb R\).}
Since \(\mu_{\mathbb R}\) is supported on \(\mathbb R\), for every
\(I\in\mathbb S\) and every \(z\in\mathbb C_I\),
\[
S(z,r)\cap\mathbb R=B_I(z,r)\cap\mathbb R .
\]
Hence
\[
\mu_{\mathbb R}(B_I(z,r))
=
\mu_{\mathbb R}(S(z,r))
\le C ,
\qquad z\in\mathbb C_I .
\]
By the classical Fock--Carleson characterization on the complex Fock space
\(\mathcal F_\alpha^p(\mathbb C_I)\), the measure \(\mu_{\mathbb R}\), regarded
as a measure on \(\mathbb C_I\), is a Fock--Carleson measure; see
\cite[Theorem~3.29]{MR2934601}.

Let \(f\in F_\alpha^p\). By the Splitting Lemma, on \(\mathbb C_I\) we may write
\[
f_I=F+GJ,\qquad J\perp I,
\]
where \(F,G\in\mathcal F_\alpha^p(\mathbb C_I)\). Therefore
\[
\begin{aligned}
\int_{\mathbb H}
\left|f(z)e^{-\frac{\alpha}{2}|z|^2}\right|^p\,d\mu_{\mathbb R}(z)
&=
\int_{\mathbb R}
\bigl(|F(z)|^2+|G(z)|^2\bigr)^{p/2}
e^{-\frac{p\alpha}{2}|z|^2}\,d\mu_{\mathbb R}(z)  \\
&\lesssim
\int_{\mathbb C_I}
\left|F(z)e^{-\frac{\alpha}{2}|z|^2}\right|^p\,d\mu_{\mathbb R}(z)  +
\int_{\mathbb C_I}
\left|G(z)e^{-\frac{\alpha}{2}|z|^2}\right|^p\,d\mu_{\mathbb R}(z)  \\
&\lesssim
\|F\|_{p,\alpha,I}^p+\|G\|_{p,\alpha,I}^p  \\
&\lesssim
\|f\|_{p,\alpha,I}^p
\approx
\|f\|_{p,\alpha}^p .
\end{aligned}
\]
Thus \(\mu_{\mathbb R}\) is a quaternionic Fock--Carleson measure.

\noindent\textbf{Step 2: The part supported on \(\mathbb H\setminus\mathbb R\).}
Let \(0<\rho<r/4\). Choose a \(\rho\)-lattice
\(\{z_n=x_n+iy_n\}_{n\ge1}\subset\mathbb C^+\), with \(y_n>0\), such that
\[
\mathbb C^+\subset\bigcup_{n\ge1}D(z_n,\rho)
\]
and the enlarged disks \(D(z_n,2\rho)\) have bounded overlap. For each
\(I\in\mathbb S\), set
\[
(z_n)_I:=x_n+y_nI\in\mathbb C_I^+ .
\]
Then the family \(\{B_I((z_n)_I,\rho)\}_{n\ge1}\) covers
\(\mathbb C_I^+\), and the family
\(\{B_I((z_n)_I,2\rho)\}_{n\ge1}\) has bounded overlap, with constants
independent of \(I\).

For \(f\in F_\alpha^p\), set
\[
\mathcal I(f)
:=
\int_{\mathbb H}
\left|f(w)e^{-\frac{\alpha}{2}|w|^2}\right|^p\,d\mu^1(w).
\]
Using the disintegration formula \eqref{misure1}, we obtain
\[
\mathcal I(f)
=
\int_{\mathbb S}\int_{\mathbb C_I^+}
\left|f(w)e^{-\frac{\alpha}{2}|w|^2}\right|^p
\,d\mu_I^+(w)\,d\nu_+(I).
\]
Since \(\{B_I((z_n)_I,\rho)\}_n\) covers \(\mathbb C_I^+\), it follows that
\[
\mathcal I(f)
\le
\int_{\mathbb S}\sum_n
\int_{B_I((z_n)_I,\rho)}
\left|f(w)e^{-\frac{\alpha}{2}|w|^2}\right|^p
\,d\mu_I^+(w)\,d\nu_+(I).
\]
By Lemma~\ref{lemma 4.1}, for \(w\in B_I((z_n)_I,\rho)\),
\[
\left|f(w)e^{-\frac{\alpha}{2}|w|^2}\right|^p
\lesssim
\int_{B_I((z_n)_I,2\rho)}
\left|f(u)e^{-\frac{\alpha}{2}|u|^2}\right|^p
\,dm_{2,I}(u),
\]
where the implicit constant depends only on \(p,\alpha\), and \(\rho\). Hence
\[
\begin{aligned}
\mathcal I(f)
&\lesssim
\int_{\mathbb S}\sum_n
\mu_I^+\bigl(B_I((z_n)_I,\rho)\bigr)
\int_{B_I((z_n)_I,2\rho)}
\left|f(u)e^{-\frac{\alpha}{2}|u|^2}\right|^p
\,dm_{2,I}(u)\,d\nu_+(I).
\end{aligned}
\]

Fix \(J\in\mathbb S\). If \(u=x+yI\in B_I((z_n)_I,2\rho)\), then the
corresponding point \(x+yJ\in\mathbb C_J\) belongs to
\(B_J((z_n)_J,2\rho)\). By the representation formula,
\[
|f(x+yI)|^p
\lesssim
|f(x+yJ)|^p+|f(x-yJ)|^p .
\]
Consequently,
\[
\begin{aligned}
&\int_{B_I((z_n)_I,2\rho)}
\left|f(u)e^{-\frac{\alpha}{2}|u|^2}\right|^p
\,dm_{2,I}(u)   \lesssim
\int_{B_J((z_n)_J,2\rho)}
\bigl(|f(w)|^p+|f(\bar w)|^p\bigr)
e^{-\frac{p\alpha}{2}|w|^2}\,dm_{2,J}(w).
\end{aligned}
\]
Set
\[
A_n^J
:=
\int_{B_J((z_n)_J,2\rho)}
\bigl(|f(w)|^p+|f(\bar w)|^p\bigr)
e^{-\frac{p\alpha}{2}|w|^2}\,dm_{2,J}(w).
\]
Since the lattice \(\{z_n\}\subset\mathbb C^+\) was chosen independently of
\(I\), the quantity \(A_n^J\) is independent of \(I\). Thus
\[
\mathcal I(f)
\lesssim
\sum_n A_n^J
\int_{\mathbb S}
\mu_I^+\bigl(B_I((z_n)_I,\rho)\bigr)\,d\nu_+(I).
\]

For each \(n\),
\[
\bigcup_{I\in\mathbb S}
\bigl(B_I((z_n)_I,\rho)\cap\mathbb C_I^+\bigr)
\subset S((z_n)_J,\rho)
\subset S((z_n)_J,r).
\]
Therefore, by the disintegration formula and condition \textup{(c)},
\[
\begin{aligned}
\int_{\mathbb S}
\mu_I^+\bigl(B_I((z_n)_I,\rho)\bigr)\,d\nu_+(I)
&=
\mu^1\!\left(
\bigcup_{I\in\mathbb S}
\bigl(B_I((z_n)_I,\rho)\cap\mathbb C_I^+\bigr)
\right) \\
&\le
\mu(S((z_n)_J,r))
\le C .
\end{aligned}
\]
It follows that
\[
\mathcal I(f)
\lesssim
\sum_n A_n^J .
\]
Since the family \(\{B_J((z_n)_J,2\rho)\}_{n\ge1}\) has bounded overlap,
\[
\begin{aligned}
\sum_n A_n^J
&\lesssim
\int_{\mathbb C_J}
\bigl(|f(w)|^p+|f(\bar w)|^p\bigr)
e^{-\frac{p\alpha}{2}|w|^2}\,dm_{2,J}(w)  \\
&\lesssim
\|f\|_{p,\alpha,J}^p
\approx
\|f\|_{p,\alpha}^p .
\end{aligned}
\]
Hence
\[
\mathcal I(f)\lesssim \|f\|_{p,\alpha}^p,
\]
and therefore \(\mu^1\) is a quaternionic Fock--Carleson measure. Combining
this with Step 1 proves \textup{(c)}\(\Rightarrow\)\textup{(a)}.

It remains to prove \textup{(b)}\(\Rightarrow\)\textup{(c)}. Fix
\(z=x+yI\in\mathbb H\) with \(y\ge0\). For each \(J\in\mathbb S\) and
\(w\in\mathbb C_J^+\), the representation formula applied to the \(z\)-variable
of the kernel gives
\[
|k_{z_J}(w)|
\le
|k_z(w)|+|k_{\bar z}(w)|,
\]
where \(z_J:=x+yJ\). Hence
\[
|k_{z_J}(w)|^p
\lesssim
|k_z(w)|^p+|k_{\bar z}(w)|^p .
\]
Multiplying by \(e^{-\frac{p\alpha}{2}|w|^2}\), integrating with respect to
\(d\mu_J^+(w)\,d\nu_+(J)\), and using condition \textup{(b)}, we get
\[
\int_{\mathbb S}\int_{\mathbb C_J^+}
|k_{z_J}(w)e^{-\frac{\alpha}{2}|w|^2}|^p
\,d\mu_J^+(w)\,d\nu_+(J)
\le C .
\]
Since \(z_J,w\in\mathbb C_J\), the classical exponential identity gives
\[
|k_{z_J}(w)e^{-\frac{\alpha}{2}|w|^2}|^p
=
e^{-\frac{p\alpha}{2}|z_J-w|^2}.
\]
Thus
\[
\int_{\mathbb S}\int_{\mathbb C_J^+}
e^{-\frac{p\alpha}{2}|z_J-w|^2}
\,d\mu_J^+(w)\,d\nu_+(J)
\le C .
\]
On the other hand,
\[
S(z,r)\cap\mathbb C_J^+
=
B_J(z_J,r)\cap\mathbb C_J^+,
\]
and hence, for \(w\in S(z,r)\cap\mathbb C_J^+\),
\[
e^{-\frac{p\alpha}{2}|z_J-w|^2}
\ge
e^{-\frac{p\alpha}{2}r^2}.
\]
Therefore
\[
e^{-\frac{p\alpha}{2}r^2}\mu^1(S(z,r))
\le
\int_{\mathbb S}\int_{\mathbb C_J^+}
e^{-\frac{p\alpha}{2}|z_J-w|^2}
\,d\mu_J^+(w)\,d\nu_+(J)
\le C .
\]
This gives a uniform bound for \(\mu^1(S(z,r))\).

It remains to estimate the real part. Since \(\mu_{\mathbb R}\) is supported on
\(\mathbb R\),
\[
\mu_{\mathbb R}(S(z,r))
=
\mu_{\mathbb R}(B_I(z,r)).
\]
For \(w\in B_I(z,r)\cap\mathbb R\), we have \(z,w\in\mathbb C_I\) and
\[
|k_z(w)e^{-\frac{\alpha}{2}|w|^2}|^p
=
e^{-\frac{p\alpha}{2}|z-w|^2}
\ge
e^{-\frac{p\alpha}{2}r^2}.
\]
Hence, using condition \textup{(b)} again,
\[
\begin{aligned}
e^{-\frac{p\alpha}{2}r^2}\mu_{\mathbb R}(S(z,r))
&\le
\int_{B_I(z,r)}
|k_z(w)e^{-\frac{\alpha}{2}|w|^2}|^p\,d\mu_{\mathbb R}(w) \\
&\le
\int_{\mathbb H}
|k_z(w)e^{-\frac{\alpha}{2}|w|^2}|^p\,d\mu(w)
\le C .
\end{aligned}
\]
Combining the estimates for \(\mu^1\) and \(\mu_{\mathbb R}\), we obtain
\[
\mu(S(z,r))\le C,
\qquad z\in\mathbb H.
\]
Thus \textup{(c)} holds. The proof is complete.
\end{proof}

\begin{remark}
Condition \textup{(c)} is independent of $p$ and $\alpha$. Consequently, if condition \textup{(a)} holds for one choice of $p>0$ and $\alpha>0$, then it holds for every such choice, with constants depending on $p$ and $\alpha$. 
Likewise, condition \textup{(a)} is independent of the radius $r$.  If condition \textup{(c)} holds for one $r>0$, then it holds for every $r>0$, with constants depending on $r$.
\end{remark}

\begin{definition}
For $0<p<\infty$, a positive Borel measure \(\mu\) on \(\mathbb H\) is called a
vanishing quaternionic Fock--Carleson measure for \(F_\alpha^p\) if
\[
\lim_{n\to\infty}
\int_{\mathbb H}
\bigl|f_n(z)\mathrm e^{-\frac{\alpha}{2}|z|^2}\bigr|^p\,\mathrm d\mu(z)=0,
\]
whenever \(\{f_n\}\) is a bounded sequence in \(F_\alpha^p\) such that
\(f_n\to0\) uniformly on compact subsets of \(\mathbb H\).
\end{definition}

\begin{remark}\label{equivalent}
For slice regular functions, compact convergence on \(\mathbb H\) is equivalent
to compact convergence on any fixed slice. Indeed, fix \(I_0\in\mathbb S\). By
the representation formula, for \(q=x+yJ\in\mathbb H\),
\[
	f_n(q)
=
\frac12(1-JI_0)f_n(x+yI_0)
+
\frac12(1+JI_0)f_n(x-yI_0).
\]
Hence, if \(K\subset\mathbb H\) is compact, set $
K_{I_0}
:=
\{\operatorname{Re}q\pm |\operatorname{Im}q|I_0:\ q\in K\}$. 
The maps
\[
q\longmapsto \operatorname{Re}q\pm |\operatorname{Im}q|I_0
\]
are continuous from \(\mathbb H\) into \(\mathbb C_{I_0}\). Thus
\(K_{I_0}\) is the union of two compact images of \(K\), and hence is compact in
\(\mathbb C_{I_0}\). Moreover, by the representation formula, we have
\[
\sup_{q\in K}|f_n(q)|
\le C\sup_{\zeta\in K_{I_0} }|f_n(\zeta)|.
\]
Thus uniform convergence on compact subsets of one slice implies uniform
convergence on compact subsets of \(\mathbb H\). The converse is immediate.
\end{remark}
 \begin{theorem}\label{thm:vanishing-Carleson}
Let \(\alpha>0\), \(0<p<\infty\), and \(0<r<\infty\). Let \(\mu\) be a positive
Borel measure on \(\mathbb H\) such that $
\mu^1 $ 
is a Radon measure on \(\mathbb H\setminus\mathbb R\). Then the following are equivalent:
\begin{itemize}
\item[(a)] \(\mu\) is a vanishing quaternionic Fock--Carleson measure for \(F_\alpha^p\).
\item[(b)] For every \(I\in \mathbb{S}\),
\[
\int_{\mathbb{H}}
\bigl|k_{z}(w)\, \mathrm{e}^{-\frac{\alpha}{2}|w|^{2}}\bigr|^{p}
\, \mathrm{d}\mu(w) \longrightarrow 0
\qquad \text{as } |z|\to\infty,\ z\in \mathbb{C}_I .
\]
\item[(c)] For every \(I\in \mathbb{S}\), we have
\[
\mu\bigl(S(z,r)\bigr)\to 0
\qquad \text{as } |z|\to\infty,\ z\in \mathbb C_I .
\]
\end{itemize}
\end{theorem}

\begin{proof}

We prove \textup{(b)}\(\Rightarrow\)\textup{(c)}, \textup{(a)}\(\Rightarrow\)\textup{(b)}, and \textup{(c)}\(\Rightarrow\)\textup{(a)}.

The implication \textup{(b)}\(\Rightarrow\)\textup{(c)} follows from the estimate established in the proof of Theorem~\ref{thm:Fock--Carleson}: for all \(z\in\mathbb H\),
\[
\mu\bigl(S(z,r)\bigr)
\lesssim
\int_{\mathbb H}\bigl|k_z(w) \mathrm e^{-\frac{\alpha}{2}|w|^2}\bigr|^p\,\mathrm d\mu(w)
+
\int_{\mathbb H}\bigl|k_{\bar z}(w) \mathrm e^{-\frac{\alpha}{2}|w|^2}\bigr|^p\,\mathrm d\mu(w).
\]
Hence, if \textup{(b)} holds, then \(\mu(S(z,r))\to 0\) as \(|z|\to\infty\) on every slice, since \(|\bar z|=|z|\).

We next prove \textup{(a)}\(\Rightarrow\)\textup{(b)}. Let \(I\in\mathbb S\), let
\(\{\zeta_n\}\subset \mathbb C_I\) satisfy \(|\zeta_n|\to\infty\), and set
\[
f_n(w):=k_{\zeta_n}(w),\qquad w\in\mathbb H.
\]
Then \(\|f_n\|_{p,\alpha}\approx 1\) for all \(n\).  

Moreover, \(k_{\zeta_n}\to0\) uniformly on compact subsets of \(\mathbb C_I\).    By the Remark \ref{equivalent}, this implies that
\(k_{\zeta_n}\to0\) uniformly on compact subsets of \(\mathbb H\). Hence, by the
vanishing Fock--Carleson property,
\[
\int_{\mathbb H}\bigl|k_{\zeta_n}(w) \mathrm e^{-\frac{\alpha}{2}|w|^2}\bigr|^p\,\mathrm d\mu(w)\to 0.
\]
Since the sequence \(\{\zeta_n\}\) was arbitrary, this proves \textup{(b)}.

It remains to prove \textup{(c)}\(\Rightarrow\)\textup{(a)}. Let \(\{f_n\}\) be a
bounded sequence in \(F_\alpha^p\) such that \(f_n\to0\) uniformly on compact
subsets of \(\mathbb H\). We prove that
\[
\int_{\mathbb H}
\bigl|f_n(w)\mathrm e^{-\frac{\alpha}{2}|w|^2}\bigr|^p\,\mathrm d\mu(w)
\to0.
\]
Decompose \(\mu=\mu_{\mathbb R}+\mu^1\), where
\(\operatorname{supp}\mu_{\mathbb R}\subset\mathbb R\) and \(\mu^1(\mathbb R)=0\).

\noindent\textbf{The part supported on \(\mathbb R\).}
Fix \(I\in\mathbb S\) and choose \(J\in\mathbb S\) with \(J\perp I\). Since
\[
S(z,r)\cap\mathbb R=B_I(z,r)\cap\mathbb R,\qquad z\in\mathbb C_I,
\]
condition \textup{(c)} gives
\[
\mu_{\mathbb R}\bigl(B_I(z,r)\bigr)\to0
\qquad\text{as } |z|\to\infty,\ z\in\mathbb C_I.
\]
By the Splitting Lemma, write on \(\mathbb C_I\)
\[
f_n(z)=F_n(z)+G_n(z)J,\qquad J\in\mathbb{S},\ J\perp I,
\]
where \(F_n,G_n\in \mathcal F_\alpha^p(\mathbb C_I)\). Moreover,
\[
|f_n(z)|^2=|F_n(z)|^2+|G_n(z)|^2,\qquad z\in\mathbb C_I.
\]
Hence \(\{F_n\}\) and \(\{G_n\}\) are bounded in
\(\mathcal F_\alpha^p(\mathbb C_I)\), and both converge to \(0\) uniformly on
compact subsets of \(\mathbb C_I\). By the classical vanishing Fock--Carleson
theorem on \(\mathbb C_I\), applied to the measure \(\mu_{\mathbb R}\), we get
\[
\int_{\mathbb R}
\bigl|F_n(z)\mathrm e^{-\frac{\alpha}{2}|z|^2}\bigr|^p\,\mathrm d\mu_{\mathbb R}(z)
\to0,
\qquad
\int_{\mathbb R}
\bigl|G_n(z)\mathrm e^{-\frac{\alpha}{2}|z|^2}\bigr|^p\,\mathrm d\mu_{\mathbb R}(z)
\to0;
\]
see \cite[Theorem 3.30]{MR2934601}. Therefore
\[
\begin{aligned}
\int_{\mathbb{R}} \bigl|f_n(z)\,\mathrm{e}^{-\frac{\alpha}{2}|z|^2}\bigr|^{p}\,\mathrm{d}\mu_{\mathbb{R}}(z)
&= \int_{\mathbb{R}} \Bigl(|F_n(z)|^2+|G_n(z)|^2\Bigr)^{p/2} \mathrm{e}^{-\frac{p\alpha}{2}|z|^2}\, \mathrm{d}\mu_{\mathbb{R}}(z) \\
&\lesssim \!\left(\,
\int_{\mathbb{R}} \bigl|F_n(z)\,\mathrm{e}^{-\frac{\alpha}{2}|z|^2}\bigr|^{p}\,\mathrm{d}\mu_{\mathbb{R}}(z)
+\int_{\mathbb{R}} \bigl|G_n(z)\,\mathrm{e}^{-\frac{\alpha}{2}|z|^2}\bigr|^{p}\,\mathrm{d}\mu_{\mathbb{R}}(z)
\right)\to0.
\end{aligned}
\]

\noindent\textbf{The part supported on \(\mathbb H\setminus\mathbb R\).}
We first note that condition \textup{(c)} implies the bounded box condition. Fix
\(J\in\mathbb S\). Since \(S(x+yI,r)=S(x+yJ,r)\) for \(y\ge0\), it is enough to
consider centers in \(\mathbb C_J\). The limit in \textup{(c)} gives boundedness
outside a large disk in \(\mathbb C_J\), while on the remaining compact set it
follows from local finiteness. Thus there is \(M>0\) such that
\[
\mu(S(z,r))\le M,\qquad z\in\mathbb H.
\]

Choose an \(r\)-lattice \(\{z_k\}_{k\ge1}\subset\mathbb C_J^+\), ordered so that
\(|z_k|\to\infty\), whose disks \(B_J(z_k,r)\) cover \(\mathbb C_J^+\) and whose
enlarged disks \(B_J(z_k,2r)\) have bounded overlap. By the lattice estimate
obtained in the proof of Theorem~\ref{thm:Fock--Carleson}, there is a constant
independent of \(n\) such that
\[
\mathcal{I}  (f_n) :=\int_{\mathbb{H}} \bigl|f_n(w)\,\mathrm{e}^{-\alpha|w|^2/2}\bigr|^{p}\,\mathrm{d}\mu^{1}(w)
\le C\sum_k \mu \bigl(S(z_k,r)\bigr) A_{n,k}^J,
\]
where
\[
A_{n,k}^J
:=
\int_{B_J(z_k,2r)}
\bigl(|f_n(w)|^p+|f_n(\bar w)|^p\bigr)
e^{-p\alpha |w|^2/2}\,dm_{2,J}(w).
\]
Given \(\varepsilon>0\), condition \textup{(c)} and \(|z_k|\to\infty\) give
\(N\) such that
\[
\mu(S(z_k,r))<\varepsilon,\qquad k>N.
\]
Therefore
\[
\begin{aligned}
\mathcal I(f_n)
&\le C M\sum_{k=1}^{N} A_{n,k}^J
   + C\varepsilon\sum_{k=N+1}^{\infty} A_{n,k}^J .
\end{aligned}
\]
Since \(N\) is fixed, there exists \(R>0\) such that
\[
\bigcup_{k=1}^{N} B_J(z_k,2r)
\subset \{w\in\mathbb C_J: |w|\le R\}.
\]
Moreover, the conjugates of these disks are also contained in the same compact
disk \(\{w\in\mathbb C_J: |w|\le R\}\). Since \(f_n\to0\) uniformly on compact
subsets of \(\mathbb H\), we have $
\sum_{k=1}^{N}A_{n,k}^J\to0$ as $n\to\infty$. 

On the other hand, bounded overlap and the change of variables \(w\mapsto\bar w\)
give
\[
\begin{aligned}
\sum_{k=1}^{\infty} A_{n,k}^J
&\lesssim
\int_{\mathbb C_J}
\bigl(|f_n(w)|^p+|f_n(\bar w)|^p\bigr)
\mathrm e^{-\frac{p\alpha}{2}|w|^2}\,\mathrm dm_{2,J}(w)  \\
&\lesssim
\|f_n\|_{p,\alpha,J}^p
\lesssim
\|f_n\|_{p,\alpha}^p .
\end{aligned}
\]
The last quantity is uniformly bounded in \(n\). Hence
\[
\limsup_{n\to\infty}\mathcal I(f_n)\lesssim \varepsilon.
\]
Since \(\varepsilon>0\) is arbitrary, \(\mathcal I(f_n)\to0\). Combining this
with the estimate for \(\mu_{\mathbb R}\), we obtain
\[
\int_{\mathbb H}
\bigl|f_n(w)\mathrm e^{-\frac{\alpha}{2}|w|^2}\bigr|^p\,\mathrm d\mu(w)
\to0.
\]
This proves \textup{(c)}\(\Rightarrow\)\textup{(a)}, and hence completes the proof.
\end{proof}

\begin{remark}
The preceding theorem shows that the notion of vanishing quaternionic
Fock--Carleson measure is independent of the particular choice of \(p>0\) and
\(\alpha>0\). It is also independent of the radius \(r>0\) appearing in
condition \textup{(c)}.
\end{remark}

\begin{corollary}
Let \(\mu\) be a positive Borel measure on \(\mathbb H\) such that
\(\mu^1\) is a Radon measure. Fix \(I\in\mathbb S\)
and \(r>0\), and let \(\{z_k\}_{k\ge1}\subset\mathbb C_I\) be an \(r\)-lattice in
\(\mathbb C_I\). Then the following hold:
\begin{enumerate}[(a)]
\item \(\mu\) is a quaternionic Fock--Carleson measure if and only if
\[
\sup_{k\ge1}\mu(S(z_k,r))<\infty .
\]

\item \(\mu\) is a vanishing quaternionic Fock--Carleson measure if and only if
\[
\mu(S(z_k,r))\to0
\qquad\text{as } k\to\infty .
\]
\end{enumerate}
Both characterizations are independent of the particular choice of the slice
\(I\), the radius \(r\), and the \(r\)-lattice.
\end{corollary}

\section{The Berezin transform of slice functions and BMO}\label{sec:berezin-bmo} 

\begin{definition}
A slice function $f:\mathbb H\to\mathbb H$ is said to be slice continuous if, for every
$I\in\mathbb S$, its restriction $
f_I $ is continuous on the slice $\mathbb C_I$. We write $C(\mathbb H)$ for the space of slice continuous functions on $\mathbb H$.

We also define
\[
C_0(\mathbb{H})
:= \Bigl\{\, f\in C(\mathbb{H}) : \text{for every } I\in \mathbb{S},\ f_I \ \text{vanishes at infinity on } \mathbb{C}_I \Bigr\}.
\]
 
\end{definition}
Thus \(C_0(\mathbb H)\) is defined slice-wise: the vanishing condition is imposed
on each complex slice \(\mathbb C_I\), rather than with respect to the Euclidean
topology of \(\mathbb H\).  For \(1\le p<\infty\), \(L_s^p(\mathbb H,\mathrm dV)\) denotes the space of
measurable slice functions \(h\) such that
\[
\|h\|_{L_s^p(\mathrm dV)}^p
:=
\int_{\mathbb H}|h(z)|^p\,\mathrm dV(z)<\infty,
\]
and   \(L_s^\infty(\mathbb H,\mathrm dV)\) denotes the space of essentially bounded measurable slice functions on \(\mathbb H\) with respect to \(\mathrm dV\).

For a measurable function \(\varphi:\mathbb H\to\mathbb H\), we say that \(\varphi\) satisfies condition \((I_p)\) if
\[
\int_{\mathbb C_I}|K_\alpha(z,a)|^2\,|\varphi(z)|^p\,\mathrm d\lambda_{\alpha,I}(z)<\infty
\qquad\text{for all } a\in\mathbb C_I,\ I\in\mathbb S.
\]
On each slice \(\mathbb C_I\), the kernel has the exponential form
\[
|K_\alpha(z,a)|=\exp\bigl(\alpha\operatorname{Re}(z\bar a)\bigr),\qquad\text{for all } a\in\mathbb C_I. 
\]
Hence \((I_p)\) is equivalently characterized by
\[
\int_{\mathbb C_I}|K_\alpha(z,a)|\,|\varphi(z)|^p\,\mathrm d\lambda_{\alpha,I}(z)<\infty
\qquad\text{for all } a\in\mathbb C_I,\ I\in\mathbb S.
\]
In particular, every bounded measurable function on $\mathbb{H}$ satisfies \((I_p)\).

\begin{lemma}\label{lem:Ip-one-slice}
Let \(\varphi\) be a slice function on \(\mathbb H\). Then condition \((I_p)\) can be verified on a single slice: if there exists \(I_0\in\mathbb S\) such that
\[
\int_{\mathbb C_{I_0}} |K_\alpha(z,a)|^2\,|\varphi(z)|^p\,\mathrm d\lambda_{\alpha,I_0}(z)<\infty
\qquad\text{for all } a\in\mathbb C_{I_0},
\]
then the same condition holds on every slice \(\mathbb C_J\), \(J\in\mathbb S\).
\end{lemma}
\begin{proof}
  
Fix \(J\in\mathbb S\) and \(b=u+Jv\in\mathbb C_J\). Set \(a:=u+I_0v\in\mathbb C_{I_0}\).
For \(z=x+Jy\), the representation formula for slice functions gives
\[
|\varphi(x+Jy)|^p\lesssim \Bigl(|\varphi(x+I_0y)|^p+|\varphi(x-I_0y)|^p\Bigr).
\]
On the other hand, by the exponential form of the kernel on each slice,
\[
|K_\alpha(x+Jy,b)|^2
=\exp\bigl(2\alpha(xu+yv)\bigr)
=|K_\alpha(x+I_0y,a)|^2
=|K_\alpha(x-I_0y,\bar a)|^2.
\]
Then, by a change of variables,
\begin{equation}
\begin{split}
\int_{\mathbb{C}_J} |K_\alpha(z,b)|^2 |\varphi(z)|^p \,\mathrm d\lambda_{\alpha,J}(z)
&\lesssim \int_{\mathbb{C}_{I_0}} |K_\alpha(\zeta,a)|^2 |\varphi(\zeta)|^p \,\mathrm d\lambda_{\alpha,I_0}(\zeta) \\
&\quad + \int_{\mathbb{C}_{I_0}} |K_\alpha(\zeta,\overline a)|^2 |\varphi(\zeta)|^p \,\mathrm d\lambda_{\alpha,I_0}(\zeta),
\end{split}
\nonumber
\end{equation}
and the right-hand side is finite by assumption. Consequently, for slice functions, one may
check $(I_p)$ on a single slice $\mathbb C_{I_0}$.
   
\end{proof}

Let \(f\) be a slice function satisfying \((I_1)\). For each \(I\in\mathbb S\), we define on \(\mathbb C_I\) the slice-wise Berezin transform by
\[
(B_\alpha f)(z):=\int_{\mathbb C_I}|k_z(w)|^2\,f(w)\,\mathrm d\lambda_{\alpha,I}(w),
\qquad z\in\mathbb C_I.
\] 
The next theorem shows that this slice-wise definition gives rise to a slice function on \(\mathbb H\).
 
\begin{theorem}\label{thm:Berezin-slice}
Let \(f\) be a slice function on \(\mathbb H\) satisfying \((I_1)\). Then the slice-wise formula
\[
(B_\alpha f)(z)
:=\int_{\mathbb C_I}|k_z(w)|^2\,f(w)\,\mathrm d\lambda_{\alpha,I}(w)
=\frac{\alpha}{\pi}\int_{\mathbb C_I} \mathrm e^{-\alpha|w-z|^2}f(w)\,\mathrm d m_{2,I}(w),
\qquad z\in\mathbb C_I,
\]
defines a slice function \(B_\alpha f\) on \(\mathbb H\). Moreover, if on \(\mathbb C_I\) one writes
\[
f_I=F+GJ,\qquad J\perp I,
\]
then
\[
(B_\alpha f)(z)=B_{\alpha,I}F(z)+B_{\alpha,I}G(z)\,J,\qquad z\in\mathbb C_I,
\]
where \(B_{\alpha,I}\) denotes the classical Berezin transform on the complex Fock space over \(\mathbb C_I\); see  \cite{MR2934601}.
In particular, if \(f\) is intrinsic, then \(B_\alpha f\) is intrinsic.
\end{theorem} 

\begin{proof}
Write points as $z=x+yI\in\mathbb{C}_I$ and $w=x'+y'I\in\mathbb{C}_I$ with $x,x',y,y'\in\mathbb{R}$.
Since $f$ is a slice function, there exist stem functions $A,B:\mathbb{R}^2\to\mathbb H$ with
\[
A(x,-y)=A(x,y),\qquad B(x,-y)=-B(x,y),
\]
such that for every $I\in\mathbb S$,
\[
f(x'+y'I)=A(x',y')+I\,B(x',y').
\]
Using the explicit form of the kernel and measure, for $z=x+yI\in\mathbb{C}_I$, we compute
\[
B_\alpha f(x+yI)
=\frac{\alpha}{\pi}\int_{\mathbb{R}^2} \mathrm{e}^{-\alpha\big((x'-x)^2+(y'-y)^2\big)}\big(A(x',y')+I\,B(x',y')\big)\,dx'\,dy'.
\]

Define the functions
\[
a(x,y):=\frac{\alpha}{\pi}\int_{\mathbb{R}^2} \mathrm{e}^{-\alpha\big((x'-x)^2+(y'-y)^2\big)}A(x',y')\,dx'\,dy',
\]
\[
b(x,y):=\frac{\alpha}{\pi}\int_{\mathbb{R}^2} \mathrm{e}^{-\alpha\big((x'-x)^2+(y'-y)^2\big)}B(x',y')\,dx'\,dy'.
\]
Then
\[
B_\alpha f(x+yI)=a(x,y)+I\,b(x,y)\qquad(I\in\mathbb S).
\]
By variable substitution, the parity of $A, B$ is preserved:
\[
a(x,-y)=a(x,y),\qquad b(x,-y)=-b(x,y).
\]
In particular, if \(z=x\in\mathbb R\), then \(b(x,0)=0\), so \(B_\alpha f(x)\) is independent of the choice of \(I\).
Consequently, by the standard characterization of slice functions via stem functions, the map
\[
x+yI\longmapsto B_\alpha f(x+yI):=a(x,y)+I\,b(x,y)
\]
is a slice function on \(\mathbb H\).

If, in addition, $f$ is intrinsic, then the stem components take values in $\mathbb{R}$ (or, more precisely, $A(x,y),B(x,y)\in\mathbb{R}$ for all $(x,y)$). Hence $a(x,y),b(x,y)\in\mathbb{R}$, i.e., $B_\alpha f$ is intrinsic. 

On $\mathbb C_I$ write $f(w)=F(w)+G(w)J$ with $J\perp I$.
Since the weight $|k_{z }(w)|^2 $ is a real scalar kernel,
it commutes with $J$ and we have
\begin{equation}
\begin{split}
(B_\alpha f)(z)
&= \int_{\mathbb C_I} |k_{z}(w)|^2 F(w) \,\mathrm d\lambda_{\alpha,I}(w)  + \left( \int_{\mathbb C_I} |k_{z}(w)|^2 G(w) \,\mathrm d\lambda_{\alpha,I}(w) \right) J \\
&= B_{\alpha,I} F(z) + B_{\alpha,I} G(z) \, J.
\end{split}
\nonumber
\end{equation}
\end{proof}

 \begin{theorem}\label{thm 5.9}
Suppose \(\alpha,\beta>0\) and \(f\) is a real-valued nonnegative slice function
on \(\mathbb H\) satisfying \((I_1)\). For \(1\le p\le \infty\), the following
hold:
\begin{enumerate}[(a)]
\item \(B_{\alpha} f \in L_s^p(\mathbb H,\mathrm dV)\) if and only if
\(B_{\beta} f \in L_s^p(\mathbb H,\mathrm dV)\).
\item \(B_{\alpha} f \in C_0(\mathbb H)\) if and only if
\(B_{\beta} f \in C_0(\mathbb H)\).
\end{enumerate}
\end{theorem}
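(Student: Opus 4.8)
The plan is to deduce Theorem~\ref{thm 5.9} from the one-sided monotonicity results already established, namely Proposition~\ref{prop 5.7} and Proposition~\ref{prop 5.8}: for a nonnegative real-valued slice symbol, monotonicity of the Berezin transform in the weight parameter is available in \emph{both} directions, and putting the two together pins down the equivalence.

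First I would dispose of the trivial case $\alpha=\beta$. Assuming $\alpha\neq\beta$ and using the symmetry of the statement in $\alpha$ and $\beta$, we may relabel so that $0<\beta<\alpha$. Since $f\ge 0$ is a real-valued slice function, Corollary~\ref{cor:2.7} shows that $f$ is intrinsic; then by Theorem~\ref{thm:Berezin-slice} (or directly from the Gaussian representation $B_\gamma f(z)=\frac{\gamma}{\pi}\int_{\mathbb C_I}\mathrm e^{-\gamma|z-w|^2}f(w)\,\mathrm dm_{2,I}(w)$) both $B_\alpha f$ and $B_\beta f$ are again nonnegative real-valued slice functions. In particular the hypotheses of Propositions~\ref{prop 5.7} and~\ref{prop 5.8} are met with this $f$.

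Now let $X$ denote any of the spaces $L_s^p(\mathbb{H},\mathrm{d}V)$ with $1<p<\infty$, or $L_s^\infty(\mathbb{H})$, or $C_0(\mathbb{H})$. If $B_\alpha f\in X$, then since $\beta<\alpha$, Proposition~\ref{prop 5.7} (whose proof rests on the semigroup factorization $B_\beta=B_\gamma B_\alpha$ with $1/\gamma+1/\alpha=1/\beta$, legitimate by Corollary~\ref{cor:3.15}) gives $B_\beta f\in X$. Conversely, if $B_\beta f\in X$, the pointwise domination $B_\alpha f\le (\alpha/\beta)\,B_\beta f$ of Proposition~\ref{prop 5.8} --- valid precisely because $f\ge 0$ --- together with the solidity of $X$ for $X=L_s^p$ or $L_s^\infty$ (and directly via part (2) of Proposition~\ref{prop 5.8} in the $C_0$ case) yields $B_\alpha f\in X$. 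Combining the two implications gives $B_\alpha f\in X\iff B_\beta f\in X$, which for $X=L_s^p(\mathbb{H},\mathrm{d}V)$ and $X=L_s^\infty(\mathbb{H})$ is assertion (1) (for $1<p\le\infty$) and for $X=C_0(\mathbb{H})$ is assertion (2).

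The only point needing a little care --- and the one I would flag as the main, though modest, obstacle --- is the bookkeeping that keeps the intermediate objects inside the symbol class to which the propositions apply: one must confirm that $B_\beta f$ and $B_\alpha f$ remain nonnegative real-valued slice functions so that Proposition~\ref{prop 5.8} is applicable, and that the auxiliary weights ($\gamma$, etc.) stay positive when reduced to the case $\beta<\alpha$. No new estimate is required; the theorem is a formal consequence of the two preceding propositions once one exploits that, for nonnegative slice symbols, weight-monotonicity of the Berezin transform holds in both directions.
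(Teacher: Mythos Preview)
Your proposal is correct and follows exactly the paper's approach: the paper's proof is the single sentence ``Both assertions follow from Propositions~\ref{prop 5.7} and~\ref{prop 5.8},'' and you have simply unpacked this by reducing to $\beta<\alpha$ and applying the two propositions in opposite directions. The extra bookkeeping you flag (that $B_\alpha f$, $B_\beta f$ remain nonnegative slice functions) is in fact unnecessary, since Propositions~\ref{prop 5.7} and~\ref{prop 5.8} only impose hypotheses on $f$ itself, not on its Berezin transforms.
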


\begin{proof}
Since \(f\) is real-valued and slice, \(f\)
is slice intrinsic. Hence, for every \(I\in\mathbb S\), the restriction
\(f_I\) is a nonnegative \(\mathbb C_I\)-valued function on \(\mathbb C_I\).

By Theorem~\ref{thm:Berezin-slice}, for every \(\gamma>0\) and every
\(z\in\mathbb C_I\),
\[
(B_\gamma f)(z)=B_{\gamma,I}(f_I)(z),
\]
where \(B_{\gamma,I}\) denotes the classical Berezin transform on the complex
Fock space over \(\mathbb C_I\). Since \(f\) is real-valued and slice intrinsic, each restriction \(f_I\) is a
nonnegative real-valued measurable function on the complex plane \(\mathbb C_I\).
Thus the classical complex result applies on every slice.

Indeed, by the classical parameter-independence theorem for Berezin transforms
on complex Fock spaces, see \cite[Theorem~3.27]{MR2934601}, for every
\(1\le p\le\infty\),
\[
B_{\alpha,I}(f_I)\in L^p(\mathbb C_I,\mathrm dm_{2,I})
\quad\Longleftrightarrow\quad
B_{\beta,I}(f_I)\in L^p(\mathbb C_I,\mathrm dm_{2,I}),
\]
with constants independent of the slice \(I\).  Integrating the corresponding estimates over
\(\mathbb S\) with respect to \(\mathrm d\sigma(I)\), and using the definition
of \(L_s^p(\mathbb H,\mathrm dV)\), gives
\[
B_\alpha f\in L_s^p(\mathbb H,\mathrm dV)
\quad\Longleftrightarrow\quad
B_\beta f\in L_s^p(\mathbb H,\mathrm dV).
\]
This proves \textup{(1)}.

For the vanishing-at-infinity assertion, the same classical result gives, on
each slice \(\mathbb C_I\),
\[
B_{\alpha,I}(f_I)\in C_0(\mathbb C_I)
\quad\Longleftrightarrow\quad
B_{\beta,I}(f_I)\in C_0(\mathbb C_I).
\]
Since \(B_\alpha f\) and \(B_\beta f\) are slice functions by
Theorem~\ref{thm:Berezin-slice}, this is precisely
\[
B_\alpha f\in C_0(\mathbb H)
\quad\Longleftrightarrow\quad
B_\beta f\in C_0(\mathbb H).
\]
This proves \textup{(2)}.
\end{proof}

\begin{remark}\label{rem 6.2}
In contrast with the complex case, one does not in general have $\langle T_f k_z, k_z\rangle_\alpha =\widetilde{f}(z)$. Indeed, for any $z\in\mathbb{C}_I$,
\[
\begin{split} 
 \langle T_f k_z, k_z\rangle_\alpha
&= \langle f \star k_z, k_z\rangle_{\alpha,I}
= \int_{\mathbb{C}_I} \overline{k_z}\, (f \star k_z)\, \mathrm{d}\lambda_{\alpha,I} \\
&= \int_{\mathbb{C}_I} \overline{k_z}(w)\big(F(w)+G(w)J\big)\star k_z(w)\, \mathrm{d}\lambda_{\alpha,I}(w) \\
&= \int_{\mathbb{C}_I} \overline{k_z}(w) F(w) k_z(w)\, \mathrm{d}\lambda_{\alpha,I}(w)
   + \int_{\mathbb{C}_I} \overline{k_z}(w) G(w)\overline{k_z(\overline{w})}\, \mathrm{d}\lambda_{\alpha,I}(w)\cdot J \\
&= \int_{\mathbb{C}_I} |k_z(w)|^2 F(w)\, \mathrm{d}\lambda_{\alpha,I}(w)
   + \int_{\mathbb{C}_I} \overline{k_z}(w) G(w) k_{\overline{z}}(w)\, \mathrm{d}\lambda_{\alpha,I}(w)\cdot J, 
\end{split}
\]which differs from \((B_\alpha f)(z)\) in general.

If \(f\) is intrinsic, equivalently \(G\equiv 0\), then the second term vanishes and we recover
\[
\langle T_f k_z,k_z\rangle_\alpha=(B_\alpha f)(z).
\]
\end{remark}
   
 
Motivated by the notion of \(\mathrm{BMO}\) for quaternionic Hardy spaces introduced in \cite{MR3571445}, we define the corresponding spaces \(\mathrm{BMO}_r^p\) in the setting of quaternionic Fock spaces.
\begin{definition}
 For any radius $r>0$ and every exponent $p\in[1,\infty)$, let $f:\mathbb H\to\mathbb H$ be a function which is locally integrable on each slice $\mathbb C_I$. For each $I\in\mathbb S$ and $z\in\mathbb C_I$, define
\[
\widehat{f}_{r,I}(z):=\frac{1}{\pi r^2}\int_{B_I(z,r)} f_I(w)\,\mathrm{d}m_{2,I}(w).
\]

We say that $f$ belongs to $\mathrm{BMO}^p_{r,I}$ if
\[
\|f\|_{\mathrm{BMO}^p_{r,I}}
:=
\sup_{z\in\mathbb{C}_I}
\left[
\frac{1}{\pi r^2}\int_{B_I(z,r)}
\bigl|f_I(w)-\widehat{f}_{r,I}(z)\bigr|^p
\,\mathrm{d}m_{2,I}(w)
\right]^{1/p}
<+\infty.
\]

We say that $f$ belongs to $\mathrm{BMO}^p_r$ if
\[
\|f\|_{\mathrm{BMO}^p_r}
:=
\sup_{I\in\mathbb S}\|f\|_{\mathrm{BMO}^p_{r,I}}
<+\infty.
\]
For later use, we denote by \(\mathrm{BMO}_r^p(\mathbb C_I)\) the classical bounded
mean-oscillation space on the complex plane \(\mathbb C_I\).
\end{definition}
 
Fix \(I\in\mathbb S\) and choose \(J\in\mathbb S\) with \(J\perp I\). If
\[
f_I=F+GJ
\qquad\text{on }\mathbb C_I,
\]
where \(F,G:\mathbb C_I\to\mathbb C_I\), then
\[
f\in \mathrm{BMO}_{r,I}^p
\quad\Longleftrightarrow\quad
F,G\in \mathrm{BMO}_r^p(\mathbb C_I),
\]
with equivalence of the corresponding seminorms. Since the classical spaces \(\mathrm{BMO}_r^p(\mathbb C_I)\) are independent of
the radius \(r>0\), with equivalent seminorms, the same is true for
\(\mathrm{BMO}_r^p\). We therefore write simply \(\mathrm{BMO}^p\).
  
  Let \(f\) be a slice function on \(\mathbb H\) which is locally integrable on each slice.
  The lemma below shows that, when \(f\) is slice, the family
  \(\{\widehat f_{r,I}\}_{I\in\mathbb S}\) is compatible across slices and therefore
  determines a globally well-defined slice function on \(\mathbb H\). Hence, for
   slice function $f$, we may define
  \[
  \widehat f_r(z) =\widehat f_{r,I}(z)
  \qquad \text{whenever } z\in\mathbb C_I.
  \]

  \begin{lemma}
  	Let \(r>0\). If \(f\) is a slice function on \(\mathbb H\) which is locally integrable on each slice, then the slice-wise averages \(\widehat f_{r,I}\) determine a globally defined slice function on \(\mathbb H\). 
  	Moreover, if $f$ is intrinsic, then $\widehat f_r$ is intrinsic.
  \end{lemma}
  
  \begin{proof}
  	Write $z=x+yI\in\mathbb{C}_I$ and $w=x'+y'I\in\mathbb{C}_I$ with $x,x',y,y'\in \mathbb{R}$.
  	Since $f$ is a slice function, there exist stem functions $\Phi=(a,b)$ 
  	such that $f(x'+y'I)=a(x',y')+I\,b(x',y')$ for every $I\in\mathbb S$.
  	Hence
  	\[
  	\widehat f_r(x+yI)
  	=\frac{1}{\pi r^2}\!\!\int_{(x'-x)^2+(y'-y)^2<r^2}\!\!\!\!\bigl(a(x',y')+I\,b(x',y')\bigr)\,\mathrm d x'\,\mathrm d y'.
  	\]
  	With the change of variables $u=x'-x$, $v=y'-y$, define
  	\[
  	a_r(x,y):=\frac{1}{\pi r^2}\int_{u^2+v^2<r^2}a(x+u,y+v)\,\mathrm d u\,\mathrm d v,
  	\qquad
  	b_r(x,y):=\frac{1}{\pi r^2}\int_{u^2+v^2<r^2}b(x+u,y+v)\,\mathrm d u\,\mathrm d v.
  	\]
  	Then
  	\[
  	\widehat f_r(x+yI)=a_r(x,y)+I\,b_r(x,y)\qquad(I\in\mathbb S).
  	\]
  	By the symmetry of the disk and the parity of \(a,b\), we get
  	\[
  	a_r(x,-y)=a_r(x,y),\qquad b_r(x,-y)=-b_r(x,y),
  	\]
  	hence \(b_r(x,0)=0\) and the values on \(\mathbb R\) do not depend on the choice of \(I\).
  	Therefore, \(\widehat f_r\) is a slice function on \(\mathbb H\).
  	If $f$ is intrinsic, then $a,b$ are real-valued, hence so are $a_r,b_r$, and $\widehat f_r$ is intrinsic.
  \end{proof}
 

\section{Boundedness and Compactness of Toeplitz Operators}\label{sec:toeplitz}
Recall that the orthogonal projection
\[
P_\alpha : L^2_{s,\alpha} \to F^2_\alpha
\]
is an integral operator 
\[
P_\alpha f(z) = \int_{\mathbb{H}} K_\alpha(z,w)\, f(w)\, \mathrm{d}\lambda_\alpha(w).
\]

As recalled in Section~\ref{sec:preliminaries}, for bounded slice-function
symbols the Toeplitz operator is defined by
\[
T_f g=P_\alpha(f\star g),\qquad g\in F_\alpha^2.
\]
For unbounded slice-function symbols satisfying the kernel-integrability
condition \((I_1)\), the same formula is understood initially on the dense
subspace
\[
\mathcal D:=\operatorname{span}_{\mathbb H}
\{K_\alpha(\cdot,a):a\in\mathbb H\}.
\]
Assume that \(\mu\) is a real-valued Borel measure on \(\mathbb H\) such that
\begin{equation}\label{eq:mu1}
\int_{\mathbb H} |K_\alpha(z,w)|\,\mathrm e^{-\alpha|w|^2}\,\mathrm d|\mu|(w)<\infty,
\qquad z\in\mathbb H.
\end{equation}
In the complex Fock space, the reproducing kernel has the  exponential form 
and the analogue of \eqref{eq:mu1} is equivalent to the corresponding square-integrability
condition. 
In the quaternionic setting, however, the reproducing kernel \(K_\alpha(z,w)\) is defined by
a slice power series rather than by such a simple exponential formula. Consequently,
\eqref{eq:mu1} and  
\eqref{eq:intro-mu2} 
are no longer equivalent in general. Nevertheless, \eqref{eq:intro-mu2} always implies
\eqref{eq:mu1} by the Cauchy--Schwarz inequality.

Let $\mu$ be a real-valued Borel measure satisfying \eqref{eq:intro-mu2}. We define the Toeplitz operator with measure symbol $\mu$, initially on a natural dense subspace of $F_\alpha^2$, by
\[
T_\mu g(z)=\int_\HH K_\alpha (z,w) g(w) \mathrm{e}^{-\alpha |w|^2} \mathrm{d} \mu(w), \qquad g \in \mathcal{D}.
\]
Under \eqref{eq:intro-mu2}, then $T_\mu$ is well defined on a dense subspace of $F_\alpha^2$. Indeed, 
If
\[
g(w)=\sum_{k=1}^N K_\alpha(w,z_k)c_k
\]
belongs to $\mathcal D$, then for each fixed $z\in\mathbb H$ the integral defining $T_\mu g(z)$ is absolutely convergent by \eqref{eq:intro-mu2} and the Cauchy--Schwarz inequality. Since $\mathcal D$ is dense in $F_\alpha^2$, this provides a natural dense domain for $T_\mu$.
 
If $\mathrm{d}\mu=\frac{2 \alpha}{\pi} f\mathrm{d}V$, where $f $ is a  real-valued  slice function, then $
T_\mu=T_f$. 
Indeed, since \(f\) is real-valued, \(f\) is slice intrinsic and the slice product $f\star g$ coincides with the pointwise product $fg$.  
Therefore
\[
T_\mu g(z)
=
\int_{\mathbb H}K_\alpha(z,w)g(w) \mathrm e^{-\alpha|w|^2}\,\mathrm d\mu(w)
=
P_\alpha(fg)(z)
=
P_\alpha(f\star g)(z)
=
T_fg(z).
\]

Before turning to the boundedness and compactness criteria, we record some
basic algebraic properties of Toeplitz operators with slice-function symbols.
\begin{proposition}
	Let \(f\in L_s^\infty(\mathbb H,\mathrm{d}V)\). Then \(T_f\) extends to a right-linear bounded operator on \(F_\alpha^2\), and
	\[
	\|T_f\|\le \sqrt2\,\|f\|_{L_s^\infty(\mathrm dV)} .
	\]
\end{proposition}

\begin{proof}
	Fix \(I\in\mathbb S\) and choose \(J\in\mathbb S\) with \(I\perp J\). 
	Write, on the slice \(\mathbb C_I\),
	\[
	f_I=A+BJ,\qquad g_I=F+GJ,
	\]
	where \(A,B:\mathbb C_I\to\mathbb C_I\) are measurable and essentially bounded, while
	\(F,G:\mathbb C_I\to\mathbb C_I\) are holomorphic. By the slice-product formula,
	\[
	(f\star g)_I
	=
	\bigl(AF-B\,\overline{G(\bar z)}\bigr)
	+
	\bigl(AG+B\,\overline{F(\bar z)}\bigr)J .
	\]
By the Cauchy--Schwarz inequality,
	\[
	\begin{aligned}
		|(f\star g)_I(z)|^2
		&\le
		\bigl(|A(z)|^2+|B(z)|^2\bigr)
		\Bigl(
		|F(z)|^2+|G(z)|^2
		+|F(\bar z)|^2+|G(\bar z)|^2
		\Bigr)   \\
		&\le
		\|f\|_\infty^2
		\Bigl(
		|F(z)|^2+|G(z)|^2
		+|F(\bar z)|^2+|G(\bar z)|^2
		\Bigr).
	\end{aligned}
	\]
	Integrating over \(\mathbb C_I\) with respect to \(\mathrm d\lambda_{\alpha,I}\), and using the invariance of the Gaussian measure under \(z\mapsto\bar z\), gives
	\[
	\|f\star g\|_{2,\alpha,I}^2
	\le
	2\|f\|_\infty^2
	\int_{\mathbb C_I}\bigl(|F(z)|^2+|G(z)|^2\bigr)\,\mathrm d\lambda_{\alpha,I}(z)
	=
	2\|f\|_\infty^2\|g\|_{2,\alpha,I}^2 .
	\]
	Since the global \(F_\alpha^2\)-norm agrees with the slice norm, and since
	\(P_\alpha:L^2_{s,\alpha}\to F_\alpha^2\) is an orthogonal projection, we have
	\[
	\|T_f g\|_{2,\alpha}
	=
	\|P_\alpha(f\star g)\|_{2,\alpha}
	\le
	\|f\star g\|_{2,\alpha}
	\le
	\sqrt2\,\|f\|_\infty\,\|g\|_{2,\alpha}.
	\]
	Thus \(T_f\) extends boundedly to \(F_\alpha^2\)  with
	\[
	\|T_f\|\le \sqrt2\,\|f\|_{L_s^\infty(\mathrm dV)} .
	\]
\end{proof}

We next record some elementary algebraic properties of Toeplitz operators. In contrast with the complex case, these properties are affected by the noncommutativity of the quaternionic product.

If \(f\) is a real-valued nonnegative slice function, then \(T_f\) is positive. Moreover, for bounded slice functions \(f\) and \(g\), one clearly has
\[
T_{f+g}=T_f+T_g.
\] 
The behavior under quaternionic scalar multiplication is more delicate. Let
\(a\in\mathbb H\), and regard \(a\) as a constant slice function. If the right
multiplication of the symbol is interpreted as the slice product \(f\star a\),
then associativity gives
\[
T_{f\star a}=T_f\circ L_a,
\qquad
L_ag:=a\star g.
\]
In general, \(a\star g\) is not the pointwise product \(ga\). Thus this relation
is not an ordinary scalar multiplication of the operator \(T_f\) unless \(a\in\mathbb R\). Accordingly, several arguments from the classical complex
theory do not carry over verbatim to the quaternionic setting.
 
In contrast with the complex case, the identity $
T_f^*=T_{\overline f}$ 
fails in general for bounded slice function symbols \(f\). More generally, if the adjoint \((T_f)^*\) is again a Toeplitz operator \(T_g\) with bounded slice function symbol \(g\), then \(g\) need not coincide with \(\overline f\). The next theorem characterizes exactly which bounded slice function symbol \(g\)  satisfies $
(T_f)^*=T_g$.
\begin{theorem}\label{thm:adjoint-characterization}
Suppose \(f\) and \(g\) are bounded slice functions on \(\mathbb H\).
For \(I\in\mathbb S\) and \(J\in\mathbb S\) with \(J\perp I\), write on \(\mathbb C_I\)
\[
f_I=A+B\,J,\qquad g_I=C+D\,J,
\]
where \(A,B,C,D:\mathbb C_I\to\mathbb C_I\) are measurable and essentially bounded. Then the following are equivalent:
\begin{enumerate}
\item[(a)] \((T_f)^*=T_g\) on \(F_\alpha^2\).

\item[(b)] For one, equivalently for every, choice of \(I\in\mathbb S\) and
\(J\in\mathbb S\) with \(J\perp I\), the corresponding decompositions above satisfy
\[
A(z)=\overline{C(z)}
\quad\text{and}\quad
B(z)=-D(\bar z)
\qquad\text{for a.e. } z\in\mathbb C_I .
\]
\end{enumerate}
\end{theorem}

\begin{proof}
It is enough to prove that condition \textup{(b)} for one pair \(I,J\) implies \textup{(a)}, and that \textup{(a)} implies condition \textup{(b)} for every pair \(I,J\).

Assume first that condition \textup{(b)} holds for some \(I\in\mathbb S\) and some \(J\perp I\). We prove \((T_f)^*=T_g\). 
Fix $\psi,\phi\in F^2_{\alpha}$ and write $\psi_I=F+GJ$, $\phi_I=H+KJ$ on $\mathbb{C}_I$ with $F,G,H,K:\mathbb{C}_I\to\mathbb{C}_I$.
Using the slice $\star$–formula,
\[
f_I\star \psi_I
=
\bigl(AF-B\,\overline{G(\bar z)}\bigr)
+\bigl(AG+B\,\overline{F(\bar z)}\bigr)J
=:X+YJ,
\]
\[
g _I\star \phi_I
=
\bigl(CH-D\,\overline{K(\bar z)}\bigr)
+\bigl(CK+D\,\overline{H(\bar z)}\bigr)J
=:U+VJ.
\] 
     
     With $J a=\overline a\,J$ and $J a J=-\,\overline a$ for $a\in\mathbb{C}_I$, one computes
\[
\overline \phi_I \cdot \,(f_I\star \psi_I)
=
\bigl(\overline H\,X+K\,\overline Y\bigr)
+\bigl(\overline H\,Y-K\,\overline X\bigr)J,
\quad
(\overline{g _I\star \phi _I})\,\cdot \psi_I
=
\bigl(\overline U\,F+V\,\overline G\bigr)
+\bigl(\overline U\,G-V\,\overline F\bigr)J,
\]
where $\overline U=\overline H\,\overline C - K(\bar z)\,\overline D$ and $\overline V=\overline K\,\overline C - \overline{H(\bar z)}\,\overline D$.
Therefore
\[
\langle f\star \psi,\,\phi\rangle_{\alpha,I}
=
\int_{\mathbb{C}_I}\!\bigl(\overline H\,X+K\,\overline Y\bigr)\,\mathrm d\lambda_{\alpha,I}
+\int_{\mathbb{C}_I}\!\bigl(\overline H\,Y-K\,\overline X\bigr)\,\mathrm d\lambda_{\alpha,I}\cdot J,
\]
\[
\langle \psi,\,g \star \phi \rangle_{\alpha,I}
=
\int_{\mathbb{C}_I}\!\bigl(\overline U\,F+V\,\overline G\bigr)\,\mathrm d\lambda_{\alpha,I}
+\int_{\mathbb{C}_I}\!\bigl(\overline U\,G-V\,\overline F\bigr)\,\mathrm d\lambda_{\alpha,I}\cdot J.
\]

After substituting $A=\overline C$ and $B(z)=-D(\bar z)$, the terms involving $A$ and $C$ cancel immediately. For the remaining terms, one performs the change of variables $t=\bar z$ and uses the identities
\[
B(t)=-D(\bar t), 
\]
together with the invariance of $\lambda_{\alpha,I}$ under conjugation, to obtain cancellation term by term. The same argument applies to the $\mathbb C_IJ$-component. 
Therefore
\[
\langle f_I\star\psi_I,\phi_I\rangle_{\alpha,I}
=
\langle \psi_I,g_I\star\phi_I\rangle_{\alpha,I}.
\]
Using the compatibility between the global and slice inner products, we obtain
\[
\langle f\star\psi,\phi\rangle_\alpha
=
\langle \psi,g\star\phi\rangle_\alpha.
\]
Since $P_\alpha$ is self-adjoint and $\psi,\phi\in F_\alpha^2$, it follows that
\[
\langle T_f\psi,\phi\rangle_\alpha
=
\langle \psi,T_g\phi\rangle_\alpha.
\]
Hence $(T_f)^*=T_g$.

Conversely, assume that \((T_f)^*=T_g\). Restricting the identity
\[
\langle T_f\psi,\phi\rangle_\alpha=\langle \psi,T_g\phi\rangle_\alpha
\]
to a fixed slice $\mathbb C_I$ and using the compatibility between the global and slice inner products, we obtain
\[
\langle f_I\star \psi_I,\phi_I\rangle_{\alpha,I}
=
\langle \psi_I,g_I\star \phi_I\rangle_{\alpha,I}.
\]
With the above decompositions and $\star$–expansions,  
taking $G\equiv 0$ and $K\equiv 0$ gives
\[
\int_{\mathbb{C}_I}\overline H\,(A-\overline C)\,F\,\mathrm d\lambda_{\alpha,I}=0
\qquad\text{for all }F,H\in  \mathcal F_\alpha^2(\mathbb C_I).
\]
Set
\[
h:=A-\overline C.
\]
Then the above identity shows that
\[
\int_{\mathbb C_I}\overline{H(w)}\,h(w)\,F(w)\,\mathrm d\lambda_{\alpha,I}(w)=0
\qquad\text{for all }F,H\in \mathcal F_\alpha^2(\mathbb C_I).
\]
Equivalently,
\[
\langle T_{h,I}F,H\rangle_{\alpha,I}=0
\qquad\text{for all }F,H\in \mathcal F_\alpha^2(\mathbb C_I),
\]
where \(T_{h,I}\) denotes the Toeplitz operator on the complex Fock space \(\mathcal F_\alpha^2(\mathbb C_I)\) with symbol \(h\).
Hence
\[
T_{h,I}=0.
\]
By the injectivity of Toeplitz quantization on the analytic Fock space; see \cite[Proposition~3.8]{MR4192830}, it follows that
\[
h=0 \quad\text{a.e. on }\mathbb C_I.
\]
Therefore
\[
A=\overline C
\qquad\text{a.e. on }\mathbb C_I.
\]
With $A=\overline C$, take $F\equiv K\equiv 0$. Then the equality of the $\mathbb C_I$-components reduces to
\[
-\int_{\mathbb C_I}\overline{H(w)}\,B(w)\,\overline{G(\bar w)}\,\mathrm d\lambda_{\alpha,I}(w)
=
\int_{\mathbb C_I}D(w)\,\overline{H(\bar w)}\,\overline{G(w)}\,\mathrm d\lambda_{\alpha,I}(w).
\]
After the change of variables $u=\bar w$ in the right-hand side and using the invariance of $\lambda_{\alpha,I}$ under conjugation, we obtain
\[
\int_{\mathbb C_I}\bigl(B(w)+D(\bar w)\bigr)\,\overline{H(w)}\,\overline{G(\bar w)}\,\mathrm d\lambda_{\alpha,I}(w)=0
\]
for all $G,H\in \mathcal F_{\alpha}^2(\mathbb C_I)$.
  
By the same injectivity argument, we obtain
\[
D(u)=-B(\bar u)
\qquad\text{for a.e. }u\in\mathbb C_I.
\]
Equivalently,
\[
B(w)=-D(\bar w)
\qquad\text{for a.e. }w\in\mathbb C_I.
\]

\end{proof} 

\begin{corollary}
Let $f$ be a bounded slice function. Fix $I\in\mathbb S$ and choose $J\in\mathbb S$ with $J\perp I$. Write
\[
f_I=A+BJ
\]
on $\mathbb C_I$, where $A,B:\mathbb C_I\to\mathbb C_I$ are measurable and essentially bounded. If
\[
B(z)=B(\bar z)
\qquad\text{for a.e. }z\in\mathbb C_I,
\]
then
\[
T_f^*=T_{\overline f}
\quad\text{on }F_\alpha^2.
\]
In particular, if $f$ is slice intrinsic, then $
T_f^*=T_{\overline f}$.
\end{corollary}

\begin{proof}
Write
\[
(\overline f)_I=\overline{A}-BJ.
\]
Thus, in the notation of Theorem~\ref{thm:adjoint-characterization}, we have
\[
C=\overline A,\qquad D=-B.
\]
Hence
\[
A=\overline C
\quad\text{and}\quad
B(z)=-D(\bar z)=B(\bar z)
\]
for a.e. \(z\in\mathbb C_I\). The conclusion therefore follows from Theorem~\ref{thm:adjoint-characterization}. The intrinsic case corresponds to \(B\equiv 0\).
\end{proof}
Throughout the remainder of this subsection, whenever $\mu$ is a positive Borel measure on $\mathbb H$, we assume that
\[
\mu^1:=\mu|_{\mathbb H\setminus\mathbb R}
\]
is a Radon measure on $\mathbb H\setminus\mathbb R$.
This is precisely the regularity hypothesis required in the Fock--Carleson measure characterizations established in Section~\ref{s:4}.
 
\begin{proof}[Proof of Theorem~\ref{thm:intro-mu-bounded}]
For \(g\in F_\alpha^2\), the definition of \(T_\mu\) gives
\[
\langle T_\mu g,g\rangle_\alpha
=
\int_{\mathbb H}|g(w)|^2 \mathrm e^{-\alpha|w|^2}\,\mathrm d\mu(w)
=
\int_{\mathbb H}\bigl|g(w) \mathrm e^{-\frac{\alpha}{2}|w|^2}\bigr|^2\,\mathrm d\mu(w).
\]
Since \(\mu\) is positive, \(T_\mu\) is bounded on \(F_\alpha^2\) if and only if there exists \(C>0\) such that
\[
\int_{\mathbb H}\bigl|g(w) \mathrm e^{-\frac{\alpha}{2}|w|^2}\bigr|^2\,\mathrm d\mu(w)
\le C\|g\|_{2,\alpha}^2,
\qquad g\in F_\alpha^2.
\]
That is, \(T_\mu\) is bounded if and only if \(\mu\) is a quaternionic Fock--Carleson measure for \(F_\alpha^2\). The equivalence of \textup{(a)}, \textup{(b)}, and \textup{(c)} now follows from Theorem~\ref{thm:Fock--Carleson}.
\end{proof}

\begin{proof}[Proof of Theorem~\ref{thm:intro-mu-compact}]
Since \(\mu\) is positive, \(T_\mu\) is compact on \(F_\alpha^2\) if and only if \(\mu\) is a vanishing quaternionic Fock--Carleson measure for \(F_\alpha^2\). Therefore the conclusion follows directly from Theorem~\ref{thm:vanishing-Carleson}.
\end{proof}

Recall that for $z=x +y  J\in\mathbb H,y \geq 0$ we write
\[
S(z,r):=\bigcup_{I\in\mathbb S} B_I(x+y I,r),
\]
and that $\mathrm dV=\mathrm d m_{2,I}\,\mathrm d\sigma(I)$ with $\sigma(\mathbb S)=1$.
\begin{lemma}\label{lemma:real-intrinsic-average}
Let $r>0$, and let $f:\HH\to\RR$ be a real-valued slice function.
For $z=x +y J\in\HH$, with $x \in\RR$, $y \ge 0$, and $J\in\mathbb S$, set
\[
z_I:=x +y I\in\mathbb C_I,\qquad I\in\mathbb S.
\]
Then, for every $I\in\mathbb S$,
\[
\int_{S(z,r)} f(w)\,\mathrm dV(w)
=
\int_{B_I(z_I,r)} f(w)\,\mathrm d m_{2,I}(w).
\]
\end{lemma}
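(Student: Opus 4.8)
The plan is to collapse the statement to a one--slice computation, exploiting that a real--valued slice function is doubly rigid: it does not depend on the imaginary unit, and it is even in the imaginary part. First I would pin down the shape of $f$. By Corollary~\ref{cor:2.7} a real--valued slice function is slice intrinsic, so in the notation of Definition~\ref{def:slice_stem} its stem components $a,b:\mathbb R^{2}\to\mathbb H$ are real--valued; writing $f(x+yI)=a(x,y)+I\,b(x,y)$ and using that this is real for \emph{every} $I\in\mathbb S$ while $a(x,y)\in\mathbb R$ and $I\,b(x,y)$ is purely imaginary unless $b(x,y)=0$, one gets $b\equiv 0$. Hence $f(x+yI)=a(x,y)$ with $a:\mathbb R^{2}\to\mathbb R$ and $a(x,-y)=a(x,y)$; in particular $f(x+yI)$ is independent of $I$, $f(x+yI)=f(x-yI)$, and $f(x+yI)=a(x,|y|)$.

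Next I would unfold the left--hand side. Using $\mathbb H\setminus\mathbb R\cong\mathbb S\times\mathbb C^{+}$, the product structure $\mathrm dV=\mathrm dm_{2,I}\,\mathrm d\sigma(I)$, and that $\mathbb R$ is $V$--null (equivalently, the disintegration \eqref{misure1} applied to $V$), one has
\[
\int_{S(z,r)}f\,\mathrm dV=\int_{\mathbb S}\Bigl(\,\int_{S(z,r)\cap\mathbb C_I^{+}}f(w)\,\mathrm dm_{2,I}(w)\Bigr)\,\mathrm d\sigma(I).
\]
Then I would identify the slice trace: if $w=x+yI\notin\mathbb R$ lies in $S(z,r)=\bigcup_{J\in\mathbb S}B_J(z_J,r)$, then $w\in\mathbb C_J$ for some $J$ with $\mathbb C_J=\mathbb C_I$, forcing $J=\pm I$; since $z_{-I}=\overline{z_I}$ this gives $S(z,r)\cap\mathbb C_I=B_I(z_I,r)\cup B_I(\overline{z_I},r)$. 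Because, by the first step, the integrand does not depend on $I$ and, in the coordinates $x+yI\leftrightarrow(x,y)$, the planar set $S(z,r)\cap\mathbb C_I^{+}$ is the same for every $I$, the inner integral is a fixed real number; as $\sigma$ is a probability measure, $\int_{S(z,r)}f\,\mathrm dV$ equals the single--slice integral $\int_{S(z,r)\cap\mathbb C_I^{+}}f\,\mathrm dm_{2,I}$.

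Finally I would rebuild the full disk. Fix $I$ and work in coordinates $(x,y)$ on $\mathbb C_I$: the reflection $(x,y)\mapsto(x,-y)$ interchanges $B_I(z_I,r)$ with $B_I(\overline{z_I},r)$ and, by the evenness of $a$, leaves $a(x,|y|)$ invariant. Transporting across $\mathbb R$ the contribution of $B_I(\overline{z_I},r)$ onto the part of $B_I(z_I,r)$ lying below the real axis reconstitutes $\int_{B_I(z_I,r)}f\,\mathrm dm_{2,I}$, which is the right--hand side; together with the previous step this is the claim. When $y_0\ge r$ this last move is vacuous, since then $B_I(\overline{z_I},r)$ misses the upper half--slice and $S(z,r)\cap\mathbb C_I^{+}=B_I(z_I,r)$ outright.

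The genuinely delicate point --- the expected main obstacle --- is this last step when $y_0<r$, where $B_I(z_I,r)$ and $B_I(\overline{z_I},r)$ overlap across the real axis: one must check that the half--plane restriction built into $\mathrm dV$ is matched \emph{exactly} against the full--disk integral on the right, and this is precisely where the reflection symmetry of $S(z,r)$ (it swaps the two slice--disks) is used in tandem with $f(x+yI)=f(x-yI)$. All the remaining manipulations are Fubini's theorem and elementary changes of variables.
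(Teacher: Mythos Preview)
Your first two steps are exactly the paper's approach: reduce to $f(x+yI)=a(x,y)$ with $a$ real and even in $y$, then unfold $\int_{S(z,r)}f\,\mathrm dV$ as $\int_{\mathbb S}\int_{S(z,r)\cap\mathbb C_I^{+}}f\,\mathrm dm_{2,I}\,\mathrm d\sigma(I)$. You are also right that the only nontrivial step is the passage from the half--slice integral to the full disk integral, and you correctly isolate the regime $y_0<r$ as delicate. The paper does not isolate this; it simply writes $\int_{S(z,r)}f\,\mathrm dV=\int_{\mathbb S}\int_{B_I(z_I,r)}f\,\mathrm dm_{2,I}\,\mathrm d\sigma(I)$ directly.

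However, your reflection argument in that regime does not close. With $A=B_I(z_I,r)$ and $B=B_I(\overline{z_I},r)$, you want $\int_{(A\cup B)\cap\mathbb C_I^{+}}f=\int_A f$. Reflection and evenness give $\int_{B\cap\mathbb C_I^{+}}f=\int_{A\cap\mathbb C_I^{-}}f$, so your claim is equivalent to the additivity $\int_{(A\cup B)\cap\mathbb C_I^{+}}f=\int_{A\cap\mathbb C_I^{+}}f+\int_{B\cap\mathbb C_I^{+}}f$. But for $y_0<r$ the sets $A\cap\mathbb C_I^{+}$ and $B\cap\mathbb C_I^{+}$ overlap in $A\cap B\cap\mathbb C_I^{+}$, and one actually gets
\[
\int_{(A\cup B)\cap\mathbb C_I^{+}}f=\int_A f-\int_{A\cap B\cap\mathbb C_I^{+}}f.
\]
Concretely, take $f\equiv1$ and $z=0$: then $S(0,r)$ is the $\mathbb H$--ball of radius $r$, and $\int_{S(0,r)}\mathrm dV=\int_{\mathbb S}\tfrac{\pi r^2}{2}\,\mathrm d\sigma=\tfrac{\pi r^2}{2}$, whereas $\int_{B_I(0,r)}\mathrm dm_{2,I}=\pi r^2$. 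So the exact equality asserted in the lemma fails when $y_0<r$; the paper's proof has precisely the same gap at the unjustified step above. What one does obtain for $f\ge0$ is the two--sided estimate
\[
\tfrac12\int_{B_I(z_I,r)}f\,\mathrm dm_{2,I}\ \le\ \int_{S(z,r)}f\,\mathrm dV\ \le\ \int_{B_I(z_I,r)}f\,\mathrm dm_{2,I},
\]
with equality when $y_0\ge r$, and this comparability is all that is needed for the applications in Theorems~\ref{thm:f1} and~\ref{thm 7.5}.
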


\begin{proof}
Since $f$ is real-valued and slice, we have $f$ is slice intrinsic. Hence there exist real-valued functions $a,b:\RR^2\to\RR$ such that
\[
f(x+yI)=a(x,y)+I\,b(x,y),
\qquad x,y\in\RR,\ I\in\mathbb S.
\]
Because $f(x+yI)\in\RR$ for all $x,y\in\RR$ and all $I\in\mathbb S$, we must have $b\equiv 0$. Therefore
\[
f(x+yI)=a(x,y),\qquad x,y\in\RR,\ I\in\mathbb S,
\]
so the value of $f(x+yI)$ depends only on the pair $(x,y)$ and is independent of $I$.

Assume first that $z\notin\RR$, and write
\[
z=x_0+y_0J,\qquad x_0\in\RR,\quad y_0>0,\quad J\in\mathbb S.
\]
Let
\[
D_r:=\{(x,y)\in\RR^2:(x-x_0)^2+(y-y_0)^2\le r^2\}.
\]
For each $I\in\mathbb S$, the map $(x,y)\mapsto x+yI$ identifies $D_r$ with the disc $B_I(z_I,r)\subset\mathbb C_I$. Since $f(x+yI)=a(x,y)$, we obtain
\[
\int_{B_I(z_I,r)} f(w)\,\mathrm d m_{2,I}(w)
=
\int_{D_r} a(x,y)\,\mathrm dx\,\mathrm dy,
\]
and the right-hand side is independent of $I$.

Since the real axis has $\mathrm dV$-measure zero, integration over $S(z,r)$ may be identified with integration over $S(z,r)\setminus\mathbb R$. By the definition of $\mathrm dV$ on $\HH\setminus\RR$,  we therefore get
\[
\int_{S(z,r)} f(w)\,\mathrm dV(w)
=
\int_{\mathbb S}\int_{B_I(z_I,r)} f(w)\,\mathrm d m_{2,I}(w)\,\mathrm d\sigma(I)
=
\int_{\mathbb S}\left(\int_{D_r} a(x,y)\,\mathrm dx\,\mathrm dy\right)\mathrm d\sigma(I).
\]
Since $\sigma(\mathbb S)=1$, it follows that
\[
\int_{S(z,r)} f(w)\,\mathrm dV(w)
=
\int_{D_r} a(x,y)\,\mathrm dx\,\mathrm dy
=
\int_{B_I(z_I,r)} f(w)\,\mathrm d m_{2,I}(w)
\]
for every $I\in\mathbb S$.

If $z\in\RR$, the same argument applies with
\[
D_r:=\{(x,y)\in\RR^2:(x-z)^2+y^2\le r^2\},
\]
and $z_I=z\in\mathbb C_I$ for every $I\in\mathbb S$. This proves the result.
\end{proof}

\begin{theorem}\label{thm:f1}
Suppose $f$ is a real-valued nonnegative slice function satisfying $(I_1)$. Then the following are equivalent:
\begin{enumerate}[(a)]
\item $T_f$ is bounded on $F^2_\alpha$;
\item $\widetilde{f}=B_\alpha f \in L_s^\infty(\mathbb{H},\mathrm{d}V)$;
\item \(B_\beta f \in L_s^\infty(\mathbb H,\mathrm dV)\) for some, equivalently
for every, \(\beta>0\);
\item \(\widehat f_r \in L_s^\infty(\mathbb H,\mathrm dV)\) for some,
equivalently for every, \(r>0\).\end{enumerate}
\end{theorem}

\begin{proof}
Set
\[
\mathrm d\mu(w):=\frac{2\alpha}{\pi}f(w)\,\mathrm dV(w).
\]
Since \(f\) is real-valued and nonnegative, \(\mu\) is a positive Borel measure
on \(\mathbb H\). Its restriction \(\mu^1\) is Radon, because \(f\) is locally
integrable with respect to \(\mathrm dV\) on \(\mathbb H\setminus\mathbb R\) by
\((I_1)\), and \(\mathrm dV\) is Radon there. Moreover,
the assumption \((I_1)\) implies that \(\mu\) satisfies \eqref{eq:intro-mu2}. Hence
Theorem~\ref{thm:intro-mu-bounded} applies to \(\mu\).
Since \(f\) is real-valued, one has \(f\star g=fg\), and therefore $
T_\mu=T_f $. 
Thus \(T_f\) is bounded if and only if the equivalent conditions in
Theorem~\ref{thm:intro-mu-bounded} hold for the measure \(\mu\).

It remains only to identify those conditions in terms of \(f\). Since \(f\) is
real-valued and slice, we have \(f\) is slice
intrinsic. By Lemma~\ref{lemma:real-intrinsic-average},
\[
\widehat\mu_r(z)
=
\frac{\mu(S(z,r))}{\pi r^2}
=
\frac{2\alpha}{\pi}\widehat f_r(z).
\]
Hence the boundedness of the symmetric box averages in
Theorem~\ref{thm:intro-mu-bounded} is equivalent to
\[
\widehat f_r\in L_s^\infty(\mathbb H,\mathrm dV).
\] 

On the other hand, since \(T_\mu=T_f\) and \(f\) is slice intrinsic,
Remark~\ref{rem 6.2} gives
\[
\int_{\mathbb H}
\bigl|k_z(w)\mathrm e^{-\alpha|w|^2/2}\bigr|^2
\,\mathrm d\mu(w)
=
\langle T_\mu k_z,k_z\rangle_\alpha
=
\langle T_f k_z,k_z\rangle_\alpha
=
(B_\alpha f)(z).
\]
Therefore condition \textup{(b)} of Theorem~\ref{thm:intro-mu-bounded} is equivalent to $
B_\alpha f \in L_s^\infty(\mathbb H,\mathrm dV)$.  
Together with the preceding identification of \(\widehat\mu_r\) and
\(\widehat f_r\), this proves the equivalence of
\textup{(a)}, \textup{(b)}, and \textup{(d)}.

Finally, the equivalence of \textup{(b)} and \textup{(c)} follows from
Theorem~\ref{thm 5.9}. Therefore all four conditions are equivalent.
\end{proof}

\begin{theorem}\label{thm 7.5}
Suppose $f $    is a real-valued nonnegative slice function satisfying $(I_1)$. Then the following are equivalent:
\begin{enumerate}[(a)]
\item $T_f$ is compact on $F^2_\alpha$;
\item $\widetilde{f} \in C_0(\mathbb{H})$;
\item $B_\beta f \in C_0(\mathbb{H})$ for some, equivalently 
for every, $\beta>0$;
\item $\widehat{f}_r \in C_0(\mathbb{H})$ for some, equivalently 
for every, $r>0$.
\end{enumerate}
\end{theorem}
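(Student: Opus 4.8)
The plan is to mirror the proof of Theorem~\ref{thm:f1}, replacing the boundedness criterion for Toeplitz operators with positive measure symbol (Theorem~\ref{thm:mu1}) by the corresponding compactness criterion (Theorem~\ref{thm:mu2}), and then to translate the resulting measure-theoretic conditions into the stated transform conditions via Lemma~\ref{lemma:real-intrinsic-average}, Remark~\ref{rem 6.2}, and Theorem~\ref{thm 5.9}.

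First I would set $\mathrm d\mu:=f\,\mathrm dV$. Since $f$ is a real-valued slice function it is intrinsic by Corollary~\ref{cor:2.7}, so $f\star g=fg$ and $T_f=T_\mu$; moreover $\mu$ is a positive measure whose Radon part is $\mu$ itself, as $\mathrm dV$ charges $\mathbb H\setminus\mathbb R$. Using the identity $|K_\alpha(z,w)|=|K_\alpha(z,w/2)|^2$ from Section~\ref{s:6} together with the reduction of $\mathrm dV$-integrals to slice integrals, condition $(I_1)$ for $f$ is equivalent to \eqref{eq:mu2} for $\mu$, so that $T_\mu$ is densely defined and Theorem~\ref{thm:mu2} applies.

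Now apply Theorem~\ref{thm:mu2}: $T_f=T_\mu$ is compact on $F^2_\alpha$ iff $z\mapsto\int_{\mathbb H}\bigl|k_z(w)\mathrm e^{-\frac\alpha2|w|^2}\bigr|^2\,\mathrm d\mu(w)\to0$ on each slice $\mathbb C_I$ as $z\to\infty$, iff $\widehat\mu_r\to0$ on each slice as $z\to\infty$. For the first, note that $\int_{\mathbb H}|k_z(w)|^2\mathrm e^{-\alpha|w|^2}f(w)\,\mathrm dV(w)$ is a fixed constant multiple of $\langle T_f k_z,k_z\rangle_\alpha$; since $f$ is intrinsic, the off-diagonal term of Remark~\ref{rem 6.2} vanishes and $\langle T_f k_z,k_z\rangle_\alpha=\widetilde f(z)$. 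By Theorem~\ref{thm:Berezin-slice}, $\widetilde f=B_\alpha f$ is a slice function (indeed intrinsic) and it is continuous, being a Gaussian convolution; for a slice function, decay at infinity on one slice propagates to every slice by the representation formula, so $\widetilde f(z)\to0$ slice-wise is equivalent to $\widetilde f\in C_0(\mathbb H)$, giving (a)$\Leftrightarrow$(b). For $\widehat\mu_r$, Lemma~\ref{lemma:real-intrinsic-average} yields $\mu(S(z,r))=\int_{B_I(z_I,r)}f\,\mathrm dm_{2,I}$, hence $\widehat\mu_r(z)=\frac{\mu(S(z,r))}{\pi r^2}$ coincides with the slice-wise average $\widehat f_r$ at the corresponding point of each slice; since $\widehat f_r$ is a slice function, $\widehat\mu_r\to0$ slice-wise is equivalent to $\widehat f_r\in C_0(\mathbb H)$, giving (a)$\Leftrightarrow$(d). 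Finally (b)$\Leftrightarrow$(c) is exactly part~(2) of Theorem~\ref{thm 5.9}.

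The routine bookkeeping aside, the one step that genuinely needs the slice structure --- and the same one as in Theorem~\ref{thm:f1} --- is the identity $\langle T_f k_z,k_z\rangle_\alpha=\widetilde f(z)$, which in the quaternionic setting holds precisely because $f$ is intrinsic (Remark~\ref{rem 6.2}); this is what links compactness of $T_f$ to the decay of the slice-wise Berezin transform. The remaining care is to read all ``vanishing at infinity'' conditions slice-wise and to invoke the slice-function property of $\widetilde f$ and $\widehat f_r$ to upgrade ``on one slice'' to ``on all of $\mathbb H$'', i.e.\ to membership in $C_0(\mathbb H)$.
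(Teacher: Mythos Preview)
Your proposal is correct and follows essentially the same approach as the paper: reduce to the positive measure case $\mathrm d\mu=f\,\mathrm dV$, invoke Theorem~\ref{thm:mu2} for the equivalences (a)$\Leftrightarrow$(b)$\Leftrightarrow$(d) via the same identifications used in Theorem~\ref{thm:f1} (Remark~\ref{rem 6.2} for $\langle T_f k_z,k_z\rangle_\alpha=\widetilde f(z)$ and Lemma~\ref{lemma:real-intrinsic-average} for $\widehat\mu_r=\widehat f_r$), and then use Theorem~\ref{thm 5.9} for (b)$\Leftrightarrow$(c). The paper's proof is in fact just a two-line pointer to exactly these ingredients, so your write-up is a faithful expansion of it.
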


\begin{proof}
Let $
\mathrm{d}\mu(w):=\frac{2\alpha}{\pi}f(w)\,\mathrm{d}V(w)$. As in the proof of Theorem~\ref{thm:f1}, the measure $\mu$ satisfies the hypotheses of Theorem~\ref{thm:intro-mu-compact}, and $T_\mu=T_f$.

Moreover, the averaging function \(\widehat f_r\) is slice continuous by the
local integrability of \(f\) on each slice. Hence, using
Lemma~\ref{lemma:real-intrinsic-average}, the condition
\(\widehat f_r\in C_0(\mathbb H)\) is equivalent to the vanishing averaging
condition for \(\widehat\mu_r\) in Theorem~\ref{thm:intro-mu-compact}. Therefore Theorem~\ref{thm:intro-mu-compact} gives the equivalence of 
\textup{(a)}, \textup{(b)}, and \textup{(d)}.

Finally, the equivalence of \textup{(b)} and \textup{(c)} follows from Theorem~\ref{thm 5.9}.
\end{proof}

\begin{remark}
For real-valued nonnegative slice function symbols, the boundedness and compactness of $T_f$ reduce essentially to the corresponding questions on the complex Fock space over each slice $\mathbb C_I$. 
\end{remark}

\begin{proof}[Proof of Theorem~\ref{thm:intro-bmo-bounded}]
	We first prove \textup{(a)}\(\Rightarrow\)\textup{(b)}. 
	Assume that \(T_f\) is bounded on \(F_\alpha^2\). Fix \(I\in\mathbb S\) and choose
	\(J\in\mathbb S\) with \(J\perp I\). Write
	\[
	f_I=F+GJ
	\qquad\text{on } \mathbb C_I,
	\]
	where \(F,G:\mathbb C_I\to\mathbb C_I\). For \(z\in\mathbb C_I\), the computation in
	Remark~\ref{rem 6.2} gives
	\[
	\langle T_f k_z,k_z\rangle_\alpha
	=
	B_{\alpha,I}F(z)+H_G(z)J,
	\]
	and
	\[
	\langle T_f k_{\bar z},k_z\rangle_\alpha
	=
	H_F(z)+B_{\alpha,I}G(z)J,
	\]
	where
	\[
	H_G(z)
	:=
	\int_{\mathbb C_I}
	\overline{k_z(w)}\,G(w)\,k_{\bar z}(w)
	\,\mathrm d\lambda_{\alpha,I}(w),
	\]
	and \(H_F\) is defined analogously with \(G\) replaced by \(F\).

	Since \(\|k_z\|_{2,\alpha}=\|k_{\bar z}\|_{2,\alpha}=1\), boundedness of \(T_f\) implies
	\[
	\bigl|\langle T_f k_z,k_z\rangle_\alpha\bigr|
	\le \|T_f\|,
	\qquad
	\bigl|\langle T_f k_{\bar z},k_z\rangle_\alpha\bigr|
	\le \|T_f\|.
	\]
	Moreover, \(B_{\alpha,I}F(z),H_G(z),H_F(z),B_{\alpha,I}G(z)\in\mathbb C_I\). Hence, for
	\(u,v\in\mathbb C_I\),
	\[
	|u+vJ|^2=|u|^2+|v|^2.
	\]
	Applying this identity to the two preceding decompositions, we obtain
	\[
	|B_{\alpha,I}F(z)|\le \|T_f\|,
	\qquad
	|B_{\alpha,I}G(z)|\le \|T_f\|,
	\qquad z\in\mathbb C_I .
	\]
	By the slice-wise representation of the Berezin transform,
	\[
	(B_\alpha f)_I
	=
	B_{\alpha,I}F+B_{\alpha,I}G\,J,
	\]
	and therefore
	\[
	|(B_\alpha f)(z)|
	\le \sqrt{2}\,\|T_f\|,
	\qquad z\in\mathbb C_I.
	\]
	Since \(I\in\mathbb S\) was arbitrary, it follows that
	\[
	B_\alpha f\in L_s^\infty(\mathbb H,\mathrm dV).
	\]
	Thus \textup{(a)} implies \textup{(b)}.
	
	We next prove \textup{(b)}\(\Rightarrow\)\textup{(a)}. 
	Assume that
	\[
	B_\alpha f\in L_s^\infty(\mathbb H,\mathrm dV).
	\]
	Fix \(I\in\mathbb S\) and choose \(J\perp I\). Write
	\[
	f_I=F+GJ,
	\qquad
	F,G:\mathbb C_I\to\mathbb C_I .
	\]
	By Theorem~\ref{thm:Berezin-slice},
	\[
	(B_\alpha f)_I
	=
	B_{\alpha,I}F+B_{\alpha,I}G\,J.
	\]
	Consequently \(B_{\alpha,I}F\) and \(B_{\alpha,I}G\) are bounded on \(\mathbb C_I\). Since
	\(f\in\mathrm{BMO}^1\), the slice decomposition of the \(\mathrm{BMO}^1\)-seminorm gives $
	F,G\in \mathrm{BMO}^1(\mathbb C_I)$.  
	By the classical Fock-space theorem for Toeplitz operators with 
\(\mathrm{BMO}^1\)-symbols, see \cite[Theorem 6.20]{MR2934601}, the boundedness of \(B_{\alpha,I}F\) and
	\(B_{\alpha,I}G\) implies that the complex Toeplitz operators
	\[
	T_{F,I}h=P_{\alpha,I}(Fh),
	\qquad
	T_{G,I}h=P_{\alpha,I}(Gh),
	\qquad h\in  \mathcal F_\alpha^2(\mathbb C_I),
	\]
	are bounded on \( \mathcal F_\alpha^2(\mathbb C_I)\).
	
	Let \(g\in F_\alpha^2\), and write
	\[
	g_I=H+KJ,
	\qquad
	H,K\in \mathcal F_\alpha^2(\mathbb C_I).
	\]
	For \(h\in \mathcal F_\alpha^2(\mathbb C_I)\), set
	\[
	h^\#(z):=\overline{h(\bar z)},\qquad z\in\mathbb C_I.
	\]
	The map \(h\mapsto h^\#\) is an isometric involution on \(\mathcal F_\alpha^2(\mathbb C_I)\). By
	Proposition~\ref{prop:slice_star_formula},
	\[
	(f\star g)_I
	=
	(FH-GK^\#)+(FK+GH^\#)J.
	\]
	Applying \(P_{\alpha,I}\) componentwise yields
	\[
	(T_fg)_I
	=
	\bigl(T_{F,I}H-T_{G,I}K^\#\bigr)
	+
	\bigl(T_{F,I}K+T_{G,I}H^\#\bigr)J.
	\]
	Hence
	\[
	\begin{aligned}
		\|(T_fg)_I\|_{2,\alpha,I}
		&\lesssim
		\|T_{F,I}H\|_{\mathcal F_\alpha^2(\mathbb C_I)}
		+\|T_{G,I}K^\#\|_{\mathcal F_\alpha^2(\mathbb C_I)}  
		+\|T_{F,I}K\|_{\mathcal F_\alpha^2(\mathbb C_I)}
		+\|T_{G,I}H^\#\|_{\mathcal F_\alpha^2(\mathbb C_I)}  \\
		&\lesssim
		\|H\|_{\mathcal F_\alpha^2(\mathbb C_I)}
		+\|K\|_{\mathcal F_\alpha^2(\mathbb C_I)}\approx 
		\|g\|_{2,\alpha}.
	\end{aligned}
	\]
  
	Therefore \(T_f\) is bounded on \(F_\alpha^2\). This proves
	\textup{(b)}\(\Rightarrow\)\textup{(a)}.
	
	It remains to prove the equivalence of \textup{(b)}, \textup{(c)}, and \textup{(d)}.
	Fix \(I\in\mathbb S\) and write \(f_I=F+GJ\) with \(J\perp I\). Since
	\(f\in\mathrm{BMO}^1\), we have $
	F,G\in \mathrm{BMO}^1(\mathbb C_I)$,  
	with seminorms bounded uniformly in \(I\) by the definition of
\(\mathrm{BMO}^1\). Moreover,
	\[
	(B_\gamma f)_I
	=
	B_{\gamma,I}F+B_{\gamma,I}G\,J,
	\qquad \gamma>0,
	\]
	and the local average \(\widehat f_r\) decomposes as
	\[
	(\widehat f_r)_I
	=
	\widehat F_{r }+\widehat G_{r }\,J.
	\]
	The classical complex Fock-space \(\mathrm{BMO}^1\) theory gives, for each fixed
	\(\gamma>0\) and \(r>0\),
	\[
	B_{\alpha,I}F\in L^\infty(\mathbb C_I)
	\Longleftrightarrow
	B_{\gamma,I}F\in L^\infty(\mathbb C_I)
	\Longleftrightarrow
	\widehat F_{r }\in L^\infty(\mathbb C_I),
	\]
	and the same equivalences hold for \(G\).  Taking the supremum over \(I\in\mathbb S\) gives
	\[
	B_\alpha f\in L_s^\infty(\mathbb H,\mathrm dV)
	\Longleftrightarrow
	B_\gamma f\in L_s^\infty(\mathbb H,\mathrm dV)
	\Longleftrightarrow
	\widehat f_r\in L_s^\infty(\mathbb H,\mathrm dV).
	\]
	This proves the equivalence of \textup{(b)}, \textup{(c)}, and \textup{(d)}, and hence
	completes the proof.
\end{proof}

\begin{proof}[Proof of Theorem~\ref{thm:intro-bmo-compact}]
We first assume that \(T_f\) is compact on \(F_\alpha^2\). Fix \(I\in\mathbb S\)
and choose \(J\in\mathbb S\) with \(J\perp I\). Write
\[
f_I=F+GJ
\qquad\text{on }\mathbb C_I,
\]
where \(F,G:\mathbb C_I\to\mathbb C_I\). As in the proof of
Theorem~\ref{thm:intro-bmo-bounded}, for \(z\in\mathbb C_I\) one has
\[
\langle T_f k_z,k_z\rangle_\alpha
=
B_{\alpha,I}F(z)+H_G(z)J,
\]
and
\[
\langle T_f k_{\bar z},k_z\rangle_\alpha
=
H_F(z)+B_{\alpha,I}G(z)J,
\]
where \(H_F\) and \(H_G\) are  defined in the proof of Theorem \ref{thm:intro-bmo-bounded}.

Since \(k_z\) and \(k_{\bar z}\) are bounded in \(F_\alpha^2\) and converge
weakly to \(0\) as \(|z|\to\infty\) on the slice \(\mathbb C_I\), compactness of
\(T_f\) gives
\[
\|T_fk_z\|_{2,\alpha}\to0,
\qquad
\|T_fk_{\bar z}\|_{2,\alpha}\to0 .
\]
Hence
\[
\langle T_f k_z,k_z\rangle_\alpha\to0,
\qquad
\langle T_f k_{\bar z},k_z\rangle_\alpha\to0 .
\]
Using again the orthogonal decomposition
\(\mathbb H=\mathbb C_I\oplus \mathbb C_IJ\), we obtain
\[
B_{\alpha,I}F(z)\to0,
\qquad
B_{\alpha,I}G(z)\to0
\qquad (|z|\to\infty,\ z\in\mathbb C_I).
\]
By Theorem~\ref{thm:Berezin-slice},
\[
(B_\alpha f)_I=B_{\alpha,I}F+B_{\alpha,I}GJ.
\]
Thus \((B_\alpha f)_I\) vanishes at infinity on \(\mathbb C_I\). Since \(I\in
\mathbb S\) was arbitrary, we have \( B_\alpha f\in C_0(\mathbb H)\).

Conversely, assume that \( B_\alpha f\in C_0(\mathbb H)\). Since 
\(C_0(\mathbb H)\subset L_s^\infty(\mathbb H,\mathrm dV)\), Theorem~\ref{thm:intro-bmo-bounded} 
first implies that \(T_f\) is bounded on \(F_\alpha^2\). Fix \(I\in\mathbb S\) 
and choose \(J\perp I\). Write \(f_I=F+GJ\). 
By Theorem~\ref{thm:Berezin-slice},
\[
(B_\alpha f)_I=B_{\alpha,I}F+B_{\alpha,I}GJ.
\]
Hence
\[
B_{\alpha,I}F\in C_0(\mathbb C_I),
\qquad
B_{\alpha,I}G\in C_0(\mathbb C_I).
\]
Since \(f\in\mathrm{BMO}^1\), we also have $
F,G\in\mathrm{BMO}^1(\mathbb C_I)$. 
By the classical compactness criterion for Toeplitz operators with
\(\mathrm{BMO}^1\)-symbols on the complex Fock space
\cite[Theorem 6.27]{MR2934601}, the operators
\[
T_{F,I}h=P_{\alpha,I}(Fh),
\qquad
T_{G,I}h=P_{\alpha,I}(Gh)
\]
are compact on \(\mathcal F_\alpha^2(\mathbb C_I)\).

By Remark \ref{equivalent}, it is enough to test compactness on bounded sequences converging to zero
uniformly on compact subsets. 
Let
\(\{g_n\}\) be a bounded sequence in \(F_\alpha^2\) such that
\(g_n\to0\) uniformly on compact subsets of \(\mathbb H\). By Remark \ref{equivalent}, this is equivalent to compact convergence on the fixed slice
\(\mathbb C_I\). Write
\[
(g_n)_I=H_n+K_nJ,
\qquad
H_n,K_n\in \mathcal F_\alpha^2(\mathbb C_I).
\]
Then \(\{H_n\}\) and \(\{K_n\}\) are bounded in \(\mathcal F_\alpha^2(\mathbb C_I)\) and
converge to \(0\) uniformly on compact subsets of \(\mathbb C_I\). The same is
true for
\[
H_n^\#(z):=\overline{H_n(\bar z)},
\qquad
K_n^\#(z):=\overline{K_n(\bar z)}.
\]
In the complex Fock space, bounded sequences converging uniformly to \(0\) on
compact subsets converge weakly to \(0\). Hence compactness of \(T_{F,I}\) and
\(T_{G,I}\) yields
\[
T_{F,I}H_n\to0,\quad
T_{F,I}K_n\to0,\quad
T_{G,I}H_n^\#\to0,\quad
T_{G,I}K_n^\#\to0
\]
in \(\mathcal F_\alpha^2(\mathbb C_I)\).

Using the slice-product formula as in the proof of Theorem~\ref{thm:intro-bmo-bounded}, we have
\[
(T_fg_n)_I
=
\bigl(T_{F,I}H_n-T_{G,I}K_n^\#\bigr)
+
\bigl(T_{F,I}K_n+T_{G,I}H_n^\#\bigr)J.
\]
Therefore
\[
\|(T_fg_n)_I\|_{2,\alpha,I}\to0.
\]
Since the global and slice \(F_\alpha^2\)-norms agree, we obtain
\[
\|T_fg_n\|_{2,\alpha}\to0.
\]
Thus \(T_f\) is compact on \(F_\alpha^2\). This completes the proof.
\end{proof}
 
\bigskip

\subsection*{Conflict of interest}
The authors have no conflict of interest to declare that are relevant to the content of this article. 
\subsection*{Data availability statement}
No data, models, or code were generated or used for the research described in the article.
 
\bibliographystyle{amsplain}
\bibliography{references}

\medskip

\noindent
School of Mathematical Sciences, Dalian University of Technology,
Dalian, Liaoning 116024, P. R. China

\noindent
Email address: \texttt{zhaopeng.lin@mail.dlut.edu.cn}

\medskip

\noindent
School of Mathematical Sciences, Dalian University of Technology,
Dalian, Liaoning 116024, P. R. China

\noindent
Email address: \texttt{lyfdlut@dlut.edu.cn}

\medskip

\noindent
School of Mathematical Sciences, Dalian University of Technology,
Dalian, Liaoning 116024, P. R. China

\noindent
Email address: \texttt{zuchao@dlut.edu.cn}
\end{document}